\newtheorem{thm}{Theorem}
\newtheorem{defn}{Definition}
\newtheorem{prop}[thm]{Proposition}
\newtheorem{cor}[thm]{Corollary}
\newtheorem{lem}[thm]{Lemma}
\newcommand{\Q}{\mathbb{Q}}
\def\O{\mathcal O}
 \def\R{\mathbb R}  
\def\={\;=\;}  \def\+{\,+\,}  \def\Q{\Bbb Q}         
\def\A{\mathbb A}
\def\be{\begin{equation}}   \def\ee{\end{equation}}
\def\bes{\begin{equation*}}   \def\ees{\end{equation*}}
\newcommand{\ar}{\mathrm{ar}}
\newcommand{\fin}{\mathrm{fin}}
\newcommand{\Res}{\mathrm{Res}}
\newcommand{\ord}{\mathrm{ord}}
\newcommand{\law}{\longleftarrow}
\newcommand{\raw}{\longrightarrow}
\def\ov{\overline}
\newcommand{\dis}{\displaystyle}
\def\two{\twoheadrightarrow}
\begin{document}
\title{\bf {\Large{$H_\ar^1$ for  Arithmetic Surface is Finite}}}
\author{\bf{ K. Sugahara, L. Weng}}
\date{(February 18, 2016)}
\maketitle
\begin{abstract}
For an arithmetic surface X and a Weil divisor $D$, there are natural arithmetic 
cohomology groups $H_\ar^i(X, \O_X (D))$\ $(i=0,1,2)$. Using ind-pro topology on
adelic space $\A_{X, 012}^\ar$, we show that 
$H_\ar^0(X, \O_X (D))$ is discrete, $H_\ar^1(X, \O_X (D))$ is finite, and 
$H_\ar^2(X, \O_X (D))$ is compact. Moreover, we prove that all possible summations of canonical subspaces $\A_{X,i}^\ar(D),$\ $\A_{X, kl}^\ar(D)$ 
$(i,k,l=0,1,2)$  are closed in $\A_{X,012}^\ar$, and hence complete our proof of topological dualities of among 
$H^i_\ar$'s.
\end{abstract}

\section{Arithmetic Cohomology Groups}
In this section,  we review natural constructions and basic properties involving various canonical adelic spaces $\A_{X,*}^\ar(D)$'s and  arithmetic cohomology groups $H_\ar^i(X,\O_X(D))$\ $(i=0,1,2)$ 
associated to an arithmetic surface $X$ and a Weil divisor $D$. For details, please refer to [2].

Let $\pi: X\to \mathrm{Spec}\,(\O_F)$ be an arithmetic surface over  integer 
ring $\O_F$ of a number field $F$. We assume that $\pi$ is projective and 
flat, that $X$ is integral and regular. Denote by  $X_F$ its generic fiber, and  $k(X)$, resp. 
$k(X_F)$, the field of rational functions of $X$, resp. of $X_F$. We have $k(X)=k(X_F)$. 

Let $(X,C,x)$ be a flag of $X$, consisting of  an integral curve $C$ on $X$ and  a closed point 
$x$ of $C$). Denote by $k(X)_{C,x}$ its  local ring. It is known that $k(X)_{C,x}$ is a direct 
sum of two dimensional local fields.   Since $X$ is a Noetherian scheme, following Parshin-Beilinson ([4],\,[1]), we have a two dimensional 
adelic space $\A^{\fin}_{X,012}$, its level two subspaces $\A_{X,01}^\fin,\,\A_{X,02}^\fin,\,\A_{X,12}^\fin(D)$ and the associated level one subspaces $\A_{X,0}^\fin,\,\A_{X,1}^\fin(D),\,\A_{X,01}^\fin(D)$, for a Weil divisor $D$ on $X$. These spaces can be roughly described  as follows:
$$\begin{aligned}\A^{\fin}_X\!\!=\!\A_{X,012}^\fin\!\!:=&\!\A_{X;012}^\fin(\O_X)\!\!:=\!{\prod}_{(C,x):\,x\in C}'k(X)_{C,x}:={\prod}_C'\big({\prod}_{x:\, x\in C}'k(X)_{C,x}\big),\\
\A_{X,01}^\fin:=&\{(f_{C})_{C,x}\in \A_{X,012}\},\qquad  \A_{X,02}^\fin:=\{(f_{x})_{C,x}\in \A_{X,012}\},\\
\A_{X,12}^\fin(D):=&\big\{(f_{C,x})_{C,x}\in \A_{X,012}\,\big|\, \ord_C(f_{C,x})+\ord_C(D)\geq 0\ \forall C\subset X\big\},\\
\A_{X,0}^\fin\!\!:=\!\A_{X,01}^\fin\cap&\A_{X,02}^\fin,\,\A_{X,1}^\fin\!(D)\!\!:=\!\A_{X,01}^\fin\!\!\cap\!\A_{X,12}^\fin(D),\,\A_{X,2}^\fin(D)\!\!:=\!\A_{X,02}^\fin\!\!\cap\!\A_{X,12}^\fin(D).\end{aligned}$$  
It is well-known  that $\A_{X,012}$ admits a natural ind-pro structure
$$
\A_{X;012}^\fin(\O_X)=\lim_{\substack{\longrightarrow\\ D_1}}
\lim_{\substack{\longleftarrow\\ D_2:\,D_2\leq D_1}}\A_{X;12}(D_1)\big/\A_{X;12}(D_1).
$$

Furthermore, following [3], introduce the adelic space at infinity by
$$\A_X^\infty:=\A_{X_F}\widehat\otimes_\Q\R
:=\lim_{\substack{\longrightarrow\\ {D}_1}}\lim_{\substack{\longleftarrow\\ {D}_2:{D}_2\leq {D}_1}}
\Big(\big(\A_{X_F;1}({D}_1)\big/\A_{X_F;1}({D}_1)\big)\,\otimes _{\mathbb Q}\,\mathbb R\Big),
$$
and define
$$
\A_{X}^{\ar}:=\A_{X;012}^\ar:=\A_X^{\fin}\bigoplus\A_X^\infty.
$$
Then $\A_{X,012}^{\ar}$ admits three  level two subspaces 
$\A_{X,01}^{\ar},\, \A_{X,02}^{\ar},\, \A_{X,12}^\ar(D)$ ([2, \S2.3.1]), which can be described by 
$$
\begin{aligned} 
\A_{X,01}^{\ar}=&\big\{(f_{C,x})\times(f_P)\in\A_X^{\ar}\,\big|\,(f_{C,x})_{C,x}\!\!=\!\!(f_C)_{C,x}\!\in\!\A_{X,01}^{\fin},
 f_{E_P}\!\!=\!\!f_P\ \forall P\in X_F\big\},\\[0.2em]
\A_{X,02}^{\ar}=&\A_{X,02}^\fin\bigoplus k(X_F)\widehat\otimes_{\mathbb Q}\R,\ \ 
\A_{X,12}^{\ar}(D):=\A_{X,12}^\fin(D)\bigoplus \big(\A_{X_F,1}(D_F)\widehat\otimes_{\mathbb Q}\R\big).
\end{aligned}$$
Here, as usual, within the adelic space $\A_{X_F,01}$ for the curve $X_F/F$, we have 
$$\A_{X_F,1}(D_F):=\big\{(f_P)\in\A_{X_F,01} \,\big|\, \ord_P(f_P)+\ord_P(D_F)\geq 0\ \forall P\in X_F\big\}$$
and we, moreover, define
$$
\A_{X_F,1}(D_F)\,\widehat\otimes_{\mathbb Q}\,\R
:=\lim_{\substack{\law\\
 E_F: E_F\leq D_F}}\Big(\A_{X_F,1}(D_F)/\A_{X_F,1}(E_F)\otimes_{\mathbb Q}\R\Big),
$$
using  the natural diagonal imbeddings 
$k(X)\hookrightarrow \A_{X_F}\hookrightarrow\A_{X}^{\ar}$.
Similarly, we have three level one canonical subspaces
$$\A_{X,0}^\ar\!\!:=\!\A_{X,01}^\ar\cap\A_{X,02}^\ar,\ \A_{X,1}^\ar\!(D)\!\!:=\!\A_{X,01}^\ar\cap\A_{X,12}^\ar(D),\ 
\A_{X,2}^\ar(D)\!\!:=\!\A_{X,02}^\ar\cap\A_{X,12}^\ar(D).$$ 

Accordingly, there are natural ind-pro structures on $\A_{X,012}^\ar$ ([2,\S2.4.3]):
$$
\A_X^\ar=\lim_{\substack{\raw\\ D}}\lim_{\substack{\law\\ E:\, E\leq D}}\A_{X,12}^\ar(D)\big/\A_{X,12}^\ar(E),
$$ 
and on $\A_{X,01}^\ar$ and $\A_{X,02}^\ar$ ([2, Corollary 14]):
$$
\begin{aligned}
\A_{X,01}^\ar
=&\lim_{\substack{\raw\\ D}}\lim_{\substack{\law\\ E:\,E\leq D}}\A_{X,1}^\ar(D)\big/\A_{X,1}^\ar(E),\\
\A_{X,02}^\ar
=&\lim_{\substack{\raw\\ D}}\lim_{\substack{\law\\ E:\,E\leq D}}\A_{X,2}^\ar(D)\big/\A_{X,2}^\ar(E).
\end{aligned}
$$

Based on these genuine ind-pro structures on  arithmetic adelic spaces, we may introduce natural ind-pro topologies on these spaces, as what we do for locally compact spaces. With
topologies hence defined, following [2, \S\S3.1.2-3], particularly, [2, Proposition 26], we know that natural inclusions of the above canonical level two and hence level one subspaces to the total space are all continuous with closed images. Consequently, these subspaces are also Hausdorff, since  $\A_{X,012}^\ar$ is Hausdorff by [2, Theorem II]. For later references, we summarize this as 

\begin{prop}([2, Proposition 26])
Let $D$ be a Weil divisor on  $X$. Its canonical adelic spaces
$\A_{01}^\ar,\, \A_{02}^\ar,\, \A_{12}^\ar(D),\, \A_{0}^\ar,\,\A_{1}^\ar(D),\,\A_{2}^\ar(D)$ are closed and Hausdorff.  
\end{prop}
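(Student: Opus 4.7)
The plan is to exploit the explicit ind-pro presentations recalled in the excerpt, reduce closedness to a statement at each finite level $(D,E)$, and then deduce Hausdorffness as a formal consequence of Hausdorffness of the ambient space.

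First I would fix the ind-pro topology on
\[
\A_{X,012}^\ar=\lim_{\substack{\raw\\ D}}\lim_{\substack{\law\\ E:\, E\leq D}}\A_{X,12}^\ar(D)\big/\A_{X,12}^\ar(E)
\]
by placing on each quotient $\A_{X,12}^\ar(D)/\A_{X,12}^\ar(E)$ its natural locally compact topology, giving each pro-limit the initial topology, and the ind-limit the final topology. The presentations of $\A_{X,01}^\ar$ and $\A_{X,02}^\ar$ stated in the excerpt arise from analogous ind-pro systems built out of $\A_{X,1}^\ar(D)$ and $\A_{X,2}^\ar(D)$. My first step would be to check, at each finite level $(D,E)$, that the canonical maps
\[
\A_{X,i}^\ar(D)\big/\A_{X,i}^\ar(E)\hookrightarrow\A_{X,12}^\ar(D)\big/\A_{X,12}^\ar(E)\qquad(i=1,2)
\]
are continuous with closed image. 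Since these are maps between concrete locally compact abelian groups (adelic quotients tensored with $\R$ at infinity), closedness at each stage reduces to the standard statement that the subgroups $\A_{X_F,1}(\cdot)$ are closed in the total adelic space $\A_{X_F,01}$.

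Next I would pass to the pro-limit in $E\le D$. The initial topology automatically preserves closedness of subspaces, so $\A_{X,01}^\ar$ and $\A_{X,02}^\ar$ appear as closed subspaces of the pro-limit indexed by each fixed $D$. The delicate step is passing to the ind-limit in $D$: in general the final topology on a direct limit need not preserve closedness of subspaces. Here one must invoke the ind-pro regularity developed in [2, \S\S3.1.2-3], namely that the transition maps in the inductive system are closed embeddings of pro-limits, so that a subset closed at every stage is closed in the ind-limit. I expect this comparison of topologies to be the main technical obstacle; it is essentially the content of the cited [2, Proposition 26]. For $\A_{X,12}^\ar(D)$ the argument is easier, since it sits inside the pro-limit at the single stage $D$, and its closedness there transports directly to closedness in the ind-limit.

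Finally, since $\A_0^\ar,\A_1^\ar(D),\A_2^\ar(D)$ are by definition intersections of pairs of the level-two subspaces, they are closed as finite intersections of closed sets. Hausdorffness of all six subspaces is then automatic, as any subspace of a Hausdorff space is Hausdorff and $\A_{X,012}^\ar$ is Hausdorff by [2, Theorem II].
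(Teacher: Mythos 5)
You should first be aware that the paper does not actually prove this proposition: it is imported verbatim from the companion paper as [2, Proposition 26] (continuity of the inclusions with closed image) together with [2, Theorem II] (Hausdorffness of $\A_{012}^\ar$), and the surrounding text merely records the consequence that closed subspaces of a Hausdorff space are Hausdorff. So there is no in-paper argument to match your sketch against step by step; the fair comparison is with the strategy of [2] that the authors are citing, and your outline is consistent with it: define the topology on each level-$(D,E)$ quotient (where everything is locally compact, the archimedean graded pieces being finite-dimensional real vector spaces), check closedness of $\A_{i}^\ar(D)/\A_i^\ar(E)$ in $\A_{12}^\ar(D)/\A_{12}^\ar(E)$ there, propagate through the pro- and ind-limits, and obtain the level-one spaces as intersections $\A_0^\ar=\A_{01}^\ar\cap\A_{02}^\ar$, etc. The Hausdorffness conclusion is exactly the one the paper draws.

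Two remarks on the substance. First, your "delicate step" is slightly misdiagnosed: for the final topology on an inductive limit, a subset is closed \emph{by definition} if and only if its preimage in (equivalently, trace on) every stage is closed, so no abstract "preservation of closedness" is needed. What actually requires work is (a) identifying the trace, e.g.\ $\A_{01}^\ar\cap\A_{12}^\ar(D)=\A_1^\ar(D)$, which here holds by definition, and (b) verifying that the subspace topology induced on each stage $\varprojlim_E\A_{12}^\ar(D)/\A_{12}^\ar(E)$ from the ind-limit agrees with its given pro-topology, so that "closed at each stage" means what you want it to mean. Point (b) is where the regularity/admissibility of the ind-pro system genuinely enters, and it is precisely the content of [2, \S\S3.1.2--3] that you defer to. Second, because you defer exactly that crux to the reference being cited, your proposal is an accurate reconstruction of the intended argument rather than a self-contained proof; within the context of this paper, which itself treats the statement as a black box from [2], that is the same level of rigor the authors provide.
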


We introduce arithmetic cohomology 
groups as follows:
\begin{defn} ([2,\S2.4.1])  Let $D$ be a Weil divisor on $X$. Then we define
arithmetic cohomology groups $H_\ar^i(X,D)$ of  $D$ on $X$\ $(i=0,1,2)$ by
$$\begin{aligned}H^0_{\ar}(X,D):=&\,\A_{X, 01}^{\ar}\cap \A_{X, 02}^{\ar}\cap \A_{X, 12}^{\ar}(D);\\
H^1_{\ar}(X,D) :=&\\
=\,\big(\big(\A_{X, 01}^{\ar}+&\A_{X, 02}^{\ar}\big)\cap \A_{X, 12}^{\ar}(D)\big)\big/
\big(\A_{X, 01}^{\ar}\cap \A_{X, 12}^{\ar}(D)+\A_{X, 02}^{\ar}\cap \A_{X, 12}^{\ar}(D)\big);\\
H^2_{\ar}(X,D):=&\,\A_{X, 012}^{\ar}
\big/\big(\A_{X, 01}^{\ar}+ \A_{X, 02}^{\ar}+\A_{X, 12}^{\ar}(D)\big).\end{aligned}$$
\end{defn}

In the sequel, to simplify notations, when write $\A_{X,*}^\ar$, we will omit $X$. 
For example, we write $\A_{X,012}^\ar$ simply as $\A_{012}^\ar$.

\section{Adelic Sub-Quotient Spaces}

\begin{lem} Let $E$ be a Weil divisor on $X$. Then we have
\begin{itemize}
\item [(1)] $\A_{01}^\ar\big/\A_1(E)^\ar$ is discrete;
\item [(2)] $\A_{12}^\ar(E)\big/\A_1(E)^\ar$ is compact.
\end{itemize}
\end{lem}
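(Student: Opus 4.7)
The approach is to exploit the ind--pro description of the ambient spaces so that $\A_1^\ar(E) = \A_{01}^\ar \cap \A_{12}^\ar(E)$ plays two complementary roles: it is an open subgroup of $\A_{01}^\ar$ in the pro direction, giving discreteness in (1); and it is a cocompact subgroup of $\A_{12}^\ar(E)$ in the ind direction, giving compactness in (2).

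For (1), I plan to show directly that $\A_1^\ar(E)$ is open in $\A_{01}^\ar$. Using
$$
\A_{01}^\ar \;=\; \lim_{\substack{\raw\\ D}}\, \lim_{\substack{\law\\ E':\,E'\leq D}}\, \A_1^\ar(D)/\A_1^\ar(E'),
$$
openness reduces, for each $D$ with $E\leq D$, to openness of $\A_1^\ar(E)$ inside the corresponding pro-limit. By the definition of the pro-topology, a fundamental system of open neighborhoods of $0$ in $\lim_{\substack{\law\\ E'}} \A_1^\ar(D)/\A_1^\ar(E')$ is formed by the kernels of the structural projections to the finite stages; the kernel at the stage $E'=E$ is precisely the image of $\A_1^\ar(E)$. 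Thus $\A_1^\ar(E)$ is open in each pro-limit, and the universal property of the ind-topology then forces $\A_1^\ar(E)$ to be open in $\A_{01}^\ar$. Since an open subgroup of a topological group always has a discrete quotient, $\A_{01}^\ar/\A_1^\ar(E)$ is discrete.

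For (2), I plan to use the splitting $\A_{12}^\ar(E)=\A_{12}^\fin(E)\oplus\A_{X_F,1}(E_F)\widehat\otimes_\Q\R$ together with the analogous splitting of $\A_1^\ar(E)$ into its finite-adelic and archimedean parts, so that compactness of $\A_{12}^\ar(E)/\A_1^\ar(E)$ splits into two pieces. For the finite piece $\A_{12}^\fin(E)/\A_1^\fin(E)$, I organize the restricted product by integral curves $C\subset X$: over each such $C$, the local contribution is $\prod'_{x\in C} k(X)_{C,x}$ truncated by $\ord_C\geq -\ord_C(E)$, modulo the ``constant-along-$C$'' subspace carved out by the $\A_{01}^\fin$-condition. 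This recovers the bounded adele class group of the $1$-dimensional arithmetic scheme $C$, which is compact by classical adelic theory; the restricted-product structure then assembles these into a compact total. For the archimedean piece, compactness follows from the analogous compactness of the real-adelic class group for the generic fiber curve $X_F$ after tensoring with $\R$. Combining the two, $\A_{12}^\ar(E)/\A_1^\ar(E)$ is compact.

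The main obstacle I foresee is reconciling the subspace topology that $\A_1^\ar(E)$ inherits from the ambient $\A_{012}^\ar$ with the intrinsic topology coming from the ind--pro description---specifically, verifying that the image of $\A_1^\ar(E)$ in $\lim_{\substack{\law\\ E'}}\A_1^\ar(D)/\A_1^\ar(E')$ actually coincides with the kernel of the stage-$E$ projection rather than being only dense in it. A parallel reconciliation is required in (2), where one must match the restricted-product and ind--pro topologies curve-by-curve and then glue the finite and archimedean parts consistently. The foundational compatibility results collected in the Proposition above (cited as [2, Proposition 26]) should supply exactly the needed groundwork.
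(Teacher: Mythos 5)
There are genuine gaps in both halves. For (1), your key step asserts that, by the definition of the pro-topology, the kernels of the structural projections $\lim_{\law\, E'}\A_1^\ar(D)/\A_1^\ar(E')\to\A_1^\ar(D)/\A_1^\ar(E)$ form a fundamental system of neighborhoods of $0$. That is false for a general projective limit of topological groups: a basis is given by preimages of open neighborhoods of $0$ in the stages, and the kernel itself is open only when the stage $\A_1^\ar(D)/\A_1^\ar(E)$ is discrete. (Compare $\A_{12}^\ar(D)=\lim_{\law\, E}\A_{12}^\ar(D)/\A_{12}^\ar(E)$, whose stages are non-discrete locally compact one-dimensional adelic spaces; your principle would make $\A_{12}^\ar(E)$ open in $\A_{12}^\ar(D)$ and hence the quotient discrete, which it is not.) So the entire content of (1) is precisely to prove that the stages $\A_1^\ar(D)/\A_1^\ar(E)$ are discrete, and this is what the paper supplies: for $D=E+C$ one identifies $\A_1^\ar(D)/\A_1^\ar(E)\simeq k(C)=\A_{C,0}^\ar$, discrete in $\A_{C,01}^\ar$ by one-dimensional adelic theory, iterates over the finitely many curves in $D-E$, and then passes to the inductive limit. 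This one-dimensional input is absent from your argument, and the ``obstacle'' you flag (whether the image of $\A_1^\ar(E)$ equals the kernel) is not the real difficulty.

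For (2), the proposed splitting of $\A_1^\ar(E)$ into finite-adelic and archimedean parts does not exist: by the definition of $\A_{01}^\ar$, an element of $\A_1^\ar(E)=\A_{01}^\ar\cap\A_{12}^\ar(E)$ satisfies $f_{E_P}=f_P$, which couples the components along horizontal curves (living in $\A^\fin_X$) with the archimedean components. Consequently $\A_{12}^\ar(E)/\A_1^\ar(E)$ does not decompose into the two pieces you describe. Worse, the ``finite piece'' your plan would produce for a horizontal curve $C$ is the finite-adele class group of the number field $k(C)$, which is not compact --- indeed not even Hausdorff, since $k(C)$ is dense in its finite adeles; compactness of an arithmetic adele class group genuinely requires the archimedean places. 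The paper's route avoids this entirely: it filters $\A_{12}^\ar(E)/\A_1^\ar(E)$ curve by curve via the exact sequences $(*)$, identifies each graded piece with the full arithmetic adele class group $\A_{C,01}^\ar/\A_{C,0}^\ar$ of the curve $C$ (archimedean data included when $C$ is horizontal), invokes its compactness, and then takes a projective limit of compact groups. You would need to reorganize your argument along these lines rather than along the finite/archimedean axis.
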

\begin{proof}
(1) Using inductive limit, we have
$$\A_{01}^\ar\big/\A_1(E)^\ar=\lim_{\substack{\longrightarrow \\
D:\,D\geq E}}\A_{1}^\ar(D)\big/\A_1(E)^\ar$$ where $D$ runs over all Weil divisors on $X$ such 
that $D-E$ is effective. Note that if $D=E+C$ for an integral curve on $X$, either horizontal or 
vertical, we have $\A_{1}^\ar(D)\big/\A_1(E)^\ar\simeq k(C)=\A_{C,0}^\ar$, which is discrete (say, 
in $\A_{C,01}^\ar$). Hence, for any $D\geq E$, the quotient space 
$\A_{1}^\ar(D)\big/\A_1(E)^\ar$ is discrete as there exist finitely many $C_i$'s such that 
$D-E=\sum_i C_i$. To complete proof, recall that topologies on our adelic spaces are induced 
from the ind-pro one, that is, the strongest topology making all inductive limits continuous. Consequently, as an inductive limit of discrete spaces,  
$\A_{01}^\ar\big/\A_1(E)^\ar$ is discrete.

\noindent
(2) From exact sequence 
$$
\A_{1}^\ar(E)\big/\A_1^\ar(E')\to \A_{12}^\ar(E)\big/\A_{12}^\ar(E')
\twoheadrightarrow 
\frac{\A_{12}^\ar(E)\big/\A_{12}^\ar(E')}{\A_1^\ar(E)\big/\A_1^\ar(E')},
\eqno(*)
$$
by taking projective limit on $E'$, we obtain an exact sequence
$$
\A_1^\ar(E)\to \A_{12}^\ar(E)\twoheadrightarrow  \A_{12}^\ar(E)\big/ \A_1^\ar(E).
$$
(One may also see this using first 
$\frac{\A_{12}^\ar(E)\big/\A_{12}^\ar(E')}{\A_1^\ar(E)\big/\A_1^\ar(E')}\simeq \frac{\A_{12}^\ar(E)}{\A_1^\ar(E)+\A_{12}^\ar(E')}$ and then
$\A_{12}^\ar(E)\big/ \A_1^\ar(E)={\dis{\lim_{\substack{\longleftarrow\\ E':E'\leq E}}}}\frac{\A_{12}^\ar(E)}{\A_1^\ar(E)+\A_{12}^\ar(E')}$.)
Hence to prove $\A_{12}^\ar(E)\big/ \A_1^\ar(E)$ is compact, being the projective limit, it suffices 
to show that the quotient spaces 
$\frac{\A_{12}^\ar(E)\big/\A_{12}^\ar(E')}{\A_1^\ar(E)\big/\A_1^\ar(E')}$ are compact.
On the other hand, as above, for any integral curve $C$ on $X$,  there is a canonical exact 
sequence
$$
0\to \A_{C,0}^\ar\to \A_{C,01}^\ar\to \A_{C,01}^\ar\big/\A_{C,0}^\ar\to 0
\eqno(**)
$$
with $\A_{C,01}^\ar\big/\A_{C,0}^\ar$ compact. That is to say, 
$\frac{\A_{12}^\ar(E)\big/\A_{12}^\ar(E-C)}{\A_1^\ar(E)\big/\A_1^\ar(E-C)}$ is compact, since 
exact sequences $(*)$ and $(**)$ are equivalent when $E=E'+C$. Consequently,  
$\frac{\A_{12}^\ar(E)\big/\A_{12}^\ar(E')}{\A_1^\ar(E)\big/\A_1^\ar(E')}$ are compact for any Weil 
divisors $E\geq E'$, by first writing $E-E'=\sum_iC_i\geq 0$, then arguing in the same way as in 
proof of (1) above.
\end{proof}

\begin{prop} Let $E$ be  a Weil divisor on $X$. Then as subspaces of $\A_{012}^\ar$,
we have
\begin{itemize}
\item [(1)]  $\A_{01}^\ar(E)+\A_{12}^\ar(E)$  is closed;
\item [(2)] $\A_{0}^\ar+\A_{1}^\ar(E)$  is closed;
\item [(3)]  $\A_{1}^\ar(E)+ \A_{2}^\ar(E)$ is closed.
\end{itemize} 
\end{prop}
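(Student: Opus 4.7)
The plan is to combine Lemma 2 with Proposition 1 and the elementary fact that a discrete subgroup of a Hausdorff topological group is closed. Proposition 1 provides that every canonical adelic subspace is closed in the Hausdorff ambient $\A_{012}^\ar$, so all quotients in sight are Hausdorff topological groups; in particular, for any such quotient map, closedness of an image is equivalent to closedness of its preimage.

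For (1), I would form the continuous quotient $q\colon \A_{012}^\ar \to \A_{012}^\ar/\A_{12}^\ar(E)$. The image of $\A_{01}^\ar$ is $\A_{01}^\ar/(\A_{01}^\ar\cap\A_{12}^\ar(E)) = \A_{01}^\ar/\A_1^\ar(E)$, which is discrete by Lemma 2(1), hence closed as a discrete subgroup of a Hausdorff topological group; pulling back along $q$ yields the closedness of $\A_{01}^\ar + \A_{12}^\ar(E)$ in $\A_{012}^\ar$. For (2), both summands sit inside the closed subspace $\A_{01}^\ar$, and in the discrete Hausdorff quotient $\A_{01}^\ar/\A_1^\ar(E)$ every subgroup is automatically closed, in particular the image of $\A_0^\ar$; pulling back gives the closedness of $\A_0^\ar + \A_1^\ar(E)$ inside $\A_{01}^\ar$, and hence inside $\A_{012}^\ar$.

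Part (3) is the main obstacle. Both summands lie inside the closed subspace $\A_{12}^\ar(E)$, and Lemma 2(2) provides a compact Hausdorff quotient $\A_{12}^\ar(E)/\A_1^\ar(E)$. The image of $\A_2^\ar(E)$ in this quotient is $\A_2^\ar(E)/(\A_1^\ar(E)\cap\A_2^\ar(E)) = \A_2^\ar(E)/H^0_\ar(X,E)$; the difficulty is that this image is not expected to be discrete, so the formal trick of (1) and (2) no longer applies. My plan is to establish a mirror version of Lemma 2 for the pair $(\A_{02}^\ar,\A_2^\ar(E))$, namely that $\A_{02}^\ar/\A_2^\ar(E)$ is discrete and $\A_{12}^\ar(E)/\A_2^\ar(E)$ is compact, by rerunning the proof of Lemma 2 with the indices $1$ and $2$ interchanged, exploiting the ind-pro structure of $\A_{02}^\ar$ displayed in Section 1 and a mirrored form of the local curve exact sequence $(**)$. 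Combining these mirror statements with the originals should pin down $\A_2^\ar(E)/H^0_\ar(X,E)$ as a compact subset of the compact Hausdorff $\A_{12}^\ar(E)/\A_1^\ar(E)$, and a compact subset of a Hausdorff space is closed; pulling back then gives closedness of $\A_1^\ar(E) + \A_2^\ar(E)$ in $\A_{12}^\ar(E)$, and hence in $\A_{012}^\ar$.

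The hardest step is clearly concentrated in (3): parts (1) and (2) are essentially formal consequences of the discreteness half of Lemma 2 together with closedness of discrete subgroups in Hausdorff topological groups, whereas (3) genuinely requires the compactness half together with its mirror and demands a bona fide interplay between the curve direction (index $1$) and the point direction (index $2$) on the arithmetic surface.
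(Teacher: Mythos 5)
Your part (2) coincides with the paper's own argument, and your part (1) is close in spirit but replaces the paper's duality step by a direct appeal to Lemma 2(1); be aware that this substitution is not free. Lemma 2(1) gives discreteness of $\A_{01}^\ar/\A_1^\ar(E)$ for the inductive-limit (hence finest admissible) topology, whereas what you need is discreteness of the image of $\A_{01}^\ar$ as a \emph{subspace} of $\A_{012}^\ar/\A_{12}^\ar(E)$. The canonical continuous bijection between these two groups can a priori fail to be a homeomorphism (the subspace topology may be strictly coarser, as for a dense image of a discrete group), and the paper's detour through the topological dual of the compact group $\A_{12}^\ar(E)/\A_1^\ar(E)$ --- which identifies that dual with $\big(\A_{01}^\ar+\A_{12}^\ar((\omega)-E)\big)\big/\A_{12}^\ar((\omega)-E)$ carrying its subspace topology --- is exactly what supplies discreteness for the correct topology.

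The genuine gap is in (3). The ``mirror Lemma 2'' you propose is false: the indices $1$ and $2$ are not symmetric on an arithmetic surface. The graded pieces of $\A_{02}^\ar/\A_2^\ar(E)=\varinjlim_D \A_2^\ar(D)/\A_2^\ar(E)$ are of the form $\A_{C,1}^\ar(\cdot)$, which are compact-type and infinite, not discrete, so that quotient is not discrete; in the other direction $\A_{12}^\ar(E)/\A_2^\ar(E)$ surjects continuously onto $\A_{C,01}^\ar/\A_{C,1}^\ar(E|_C)$, an infinite discrete group, so it cannot be compact. Moreover, even if the mirror statements held, they would not ``pin down'' the image of $\A_2^\ar(E)$ as a compact subset of $\A_{12}^\ar(E)/\A_1^\ar(E)$: a subset of a compact Hausdorff space is compact iff it is closed, which is essentially what is to be proved. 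The missing ingredient is the paper's Lemma 4: $\A_2^\ar(E)/H^0_\ar(X,\O_X(E))$ is compact \emph{in its own right}, proved by exhibiting it as an extension of the compact torus $H^0(X,\O_X(E))\otimes_\Z\R\big/H^0(X,\O_X(E))$ by the finite-adelic part $\A_2^\fin(E)=\varprojlim_{E'}\A_2^\fin(E)/\A_2^\fin(E')$, whose graded pieces $\A_{C,1}^\fin(E|_C)$ are compact. This is an arithmetic input (the lattice property of $H^0$ in $H^0\otimes\R$), not a formal interchange of indices; once it is in hand, your closing step --- compact image in the Hausdorff quotient by the closed subgroup $\A_1^\ar(E)$, hence closed image, hence closed preimage --- is exactly the paper's.
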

\begin{proof} (1) Recall that $\A_1(E)^\ar:=\A_{01}^\ar\cap\A_{12}^\ar(E)$, and by Lemma 2(2), 
the quotient $\A_{12}^\ar(E)\big/\A_1^\ar(E)$ is compact. Hence, its topological dual   is  discrete 
and given by $\frac{\ \ \ov{\A_{01}^\ar+\A_{12}^\ar((\omega)-E)}\ \ }{\A_{12}^\ar((\omega)-E)}$. 
As a subspace, $\frac{\ {\A_{01}^\ar+\A_{12}^\ar((\omega)-E)}\ }{\A_{12}^\ar((\omega)-E)}$
is then discrete. Since $\A_{12}^\ar((\omega)-E)$ is closed, and all quotient spaces involved 
here are Hausdorff, $\A_{01}^\ar+\A_{12}^\ar((\omega)-E)$ is closed.  So is 
$\A_{01}^\ar+\A_{12}^\ar(E)$, since $E$ is arbitrary above.

\noindent
(2) Similarly, by Lemma 2(1),  the quotient $\A_{01}^\ar\big/\A_1^\ar(E)$ is discrete. Hence 
$\A_{01}^\ar\big/\big(\A_0^\ar+\A_1^\ar(E)\big)$ is discrete. Clearly, it is also Hausdorff. Thus 
with the fact that $\A_{01}^\ar$ is closed, we have $\A_{0}^\ar+\A_{1}^\ar(E)$  is closed.

\noindent
(3) By definition,
$H^0_\ar(X,\O_X(E))=\A_{01}^\ar\cap \A_{02}^\ar\cap\A_{12}^\ar(E)
=\big(\A_{01}^\ar\cap \A_{12}^\ar(E)\big)\cap\big(\A_{02}^\ar\cap\A_{12}^\ar(E)\big)
=\A_{1}^\ar(E)\cap \A_{2}^\ar(E)$.
Hence there exists a natural surjection 
$\A_{2}^\ar(E)\big/H^0_\ar(X,\O_X(E))
\twoheadrightarrow \big(\A_{1}^\ar(E)+ \A_{2}^\ar(E)\big)\big/\A_{1}^\ar(E)$.  By Lemma 4 below, $\A_{2}^\ar(E)\big/H^0_\ar(X,\O_X(E))$ is compact.
So is $\big(\A_{1}^\ar(E)+ \A_{2}^\ar(E)\big)\big/\A_{1}^\ar(E)$. It is also Hausdorff, since $\A_{1}^\ar(E)$ is closed and $\A_{012}^\ar$ is Hausdorff. Therefore, $\A_{1}^\ar(E)+ \A_{2}^\ar(E)$ is closed;
\end{proof}

\begin{lem} For a Weil divisor $E$ on $X$, 
$\A_{2}^\ar(E)\big/H^0_\ar(X,\O_X(E))$ is compact.
\end{lem}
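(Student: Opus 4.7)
The plan is to realise $\A_{2}^\ar(E)/H^0_\ar(X,\O_X(E))$ as an inverse limit of quotient spaces whose successive layers, analysed along single integral curves $C\subset X$, reduce to the compact quotients $\A_{C,01}^\ar/\A_{C,0}^\ar$ already invoked in $(**)$ of Lemma 2. The strategy parallels the d\'evissage used in Lemma 2(2), with the extra feature that the discrete lattice $H^0_\ar(X,\O_X(E))$ is built into the quotient from the outset. Concretely, the ind-pro structure on $\A_{02}^\ar$ restricts to give
$$
\A_{2}^\ar(E)\;=\;\lim_{\substack{\law\\ E':\,E'\leq E}}\A_{2}^\ar(E)\big/\A_{2}^\ar(E'),
$$
and since $H^0_\ar(X,\O_X(E))\subset\A_{2}^\ar(E)$ maps compatibly into each stage of the system, one obtains
$$
\A_{2}^\ar(E)\big/H^0_\ar(X,\O_X(E))\;=\;\lim_{\substack{\law\\ E':\,E'\leq E}}\frac{\A_{2}^\ar(E)}{\A_{2}^\ar(E')+H^0_\ar(X,\O_X(E))}.
$$

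For $E'=E-C$ with $C\subset X$ a single integral curve, horizontal or vertical, the equivalence of the exact sequences $(*)$ and $(**)$ employed in the proof of Lemma 2(2) provides a canonical isomorphism $\A_{2}^\ar(E)/\A_{2}^\ar(E-C)\simeq \A_{C,01}^\ar$, under which the image of $H^0_\ar(X,\O_X(E))$ is contained in the diagonal copy $k(C)=\A_{C,0}^\ar$ (because its source lies in $\A_{01}^\ar$, which encodes ``constancy along curves''). The layer $\A_{2}^\ar(E)/(\A_{2}^\ar(E-C)+H^0_\ar(X,\O_X(E)))$ is therefore a quotient of $\A_{C,01}^\ar/\A_{C,0}^\ar$, which is compact by $(**)$. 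For general $E'\leq E$, write $E-E'=\sum_i C_i$ as a finite sum of integral curves and iterate, exactly as at the end of the proof of Lemma 2(2); every layer of the inverse system is then compact.

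Each layer is Hausdorff in view of Proposition 1 and the Hausdorffness of $\A_{012}^\ar$, so Tychonoff's theorem yields compactness of the inverse limit, hence of $\A_{2}^\ar(E)/H^0_\ar(X,\O_X(E))$. The main obstacle lies in the one-curve step: one must verify that a global adelic section in $H^0_\ar(X,\O_X(E))$, after reduction modulo $\A_{2}^\ar(E-C)$, really lands in the diagonal copy $k(C)\subset\A_{C,01}^\ar$ and not in some strictly larger piece. For horizontal $C$, where the archimedean places of $F$ feed into $\A_{X}^\infty$, this requires tracing the restriction-of-sections map through both finite and infinite adelic components; this book-keeping is the genuine technical core of the lemma, while the remaining ingredients are formal consequences of Lemma 2 and Proposition 1.
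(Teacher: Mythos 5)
There is a genuine gap, and it sits exactly where you locate the ``technical core'' and then defer it: the archimedean direction of $\A_{2}^\ar(E)$ does not admit a curve-by-curve d\'evissage, so the reduction to the compact quotients $\A_{C,01}^\ar/\A_{C,0}^\ar$ cannot be carried out. Two specific problems. First, the identification of the one-curve layer is wrong: for the level-one space $\A_{2}(E)=\A_{02}\cap\A_{12}(E)$ the graded piece along a single integral curve $C$ is (on the finite part) $\A_{2}^\fin(E)/\A_{2}^\fin(E-C)\simeq \A_{C,1}^\fin(E|_C)$, the \emph{integral} adeles of $C$ twisted by $E|_C$ --- already compact on its own --- and not the full space $\A_{C,01}^\ar$. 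Consequently the mechanism you propose, namely that quotienting each layer by the image of $H^0_\ar(X,\O_X(E))$ produces a compact space of the form $\A_{C,01}^\ar/k(C)$, is not what happens: the subgroup $H^0_\ar$ plays no compactifying role layer by layer on the finite part. Second, and more seriously, the archimedean component of $\A_{2}^\ar(E)$ is $H^0(X,\O_X(E))\otimes_\Z\R$; it is a single finite-dimensional real vector space attached to the global sections and it does not filter along curves $C\subset X$ at all. The compactness of the quotient in that direction is the statement that $H^0(X,\O_X(E))$ is a full lattice in $H^0(X,\O_X(E))\otimes_\Z\R$, i.e.\ that $\big(H^0(X,\O_X(E))\otimes_\Z\R\big)/H^0(X,\O_X(E))$ is a compact torus. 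This is a global arithmetic fact invisible to any inverse system indexed by $E'\leq E$, so no amount of ``book-keeping'' through the restriction maps to individual curves will recover it.

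The paper's proof makes precisely this separation from the outset: it uses the exact sequence
$$
0\to \frac{H_\ar^0(X,\O_X(E))+\A_2^\fin(E)}{H_\ar^0(X,\O_X(E))}\longrightarrow \frac{\A_2^\ar(E)}{H_\ar^0(X,\O_X(E))}\longrightarrow\frac{\A_2^\ar(E)}{H_\ar^0(X,\O_X(E))+\A_2^\fin(E)}\to 0,
$$
identifies the kernel with $\A_2^\fin(E)$ (compact, by the projective limit over layers $\A_{C,1}^\fin(E|_C)$ --- this is where your d\'evissage does work, with the corrected identification and with no quotient by $H^0$ needed), and identifies the cokernel with the compact torus $\big(H^0(X,\O_X(E))\otimes_\Z\R\big)/H^0(X,\O_X(E))$ using $\A_{X,2}^\ar(E)=\A_{X,2}^\fin(E)+H^0(X,\O_X(E))\otimes_\Z\R$. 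To repair your argument you would have to insert exactly this splitting of the finite and infinite parts, at which point you have reproduced the paper's proof.
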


\begin{proof}
This can be deduced from the natural exact sequence
{\footnotesize{$$
0\to \frac{H_\ar^0(X,\O_X(E))+\A_2^\fin(E)}{H_\ar^0(X,\O_X(E))}\longrightarrow \frac{\A_2^\ar(E)}{H_\ar^0(X,\O_X(E))}\longrightarrow\frac{\A_2^\ar(E)}{H_\ar^0(X,\O_X(E))+\A_2^\fin(E)}\to 0.
$$}}
\!\!Indeed, $\frac{H_\ar^0(X,\O_X(E))+\A_2^\fin(E)}{H_\ar^0(X,\O_X(E))}$ is nothing but $\A_2^\fin(E)$, since, by definition, $H_\ar^0(X,\O_X(E))\cap\A_2^\fin(E)=\{0\}$.
Moreover, $\A_2^\fin(E)={\dis{\lim_{\substack{\longleftarrow\\ E}}}}\frac{\A_2^\fin(E)}{\A_2^\fin(E')}$. Thus, by the fact that, for an integral curve $C$ on $X$, $\frac{\A_2^\fin(E)}{\A_2^\fin(E-C)}\simeq\A_{C,1}^\fin(E|_C)$ is compact, we conclude that $\frac{\A_2^\fin(E)}{\A_2^\fin(E')}$ and hence $\frac{H_\ar^0(X,\O_X(E))+\A_2^\fin(E)}{H_\ar^0(X,\O_X(E))}$ are compact as well.
On the other hand, by definition, $\A_{X,2}^\ar(E)=\A_{X,2}^\fin(E)+H^0(X,\O_X(E))\otimes_\mathbb Z\mathbb R$ and
$H_\ar^0(X,\O_X(E))+\A_2^\fin(E)=H^0(X,\O_{X}(E))+\A_2^\fin(E)$. Hence, we have
 $\frac{\A_2^\ar(E)}{H_\ar^0(X,\O_X(E))+\A_2^\fin(E)}=\frac{\A_{X,2}^\fin(E)+H^0(X,\O_X(E))\otimes_\mathbb Z\mathbb R}{\A_2^\fin(E)+H^0(X,\O_{X}(E))}$ is naturally isomorphic to $\frac{H^0(X,\O_X(E))\otimes_\mathbb Z\mathbb R}{H^0(X,\O_X(E))}$, a compact torus.
\end{proof}

\begin{thm} 
For a Weil divisor $D$ on $X$,  $H^1_\ar(X,\O_X(D))$ is  finite.
\end{thm}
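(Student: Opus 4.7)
The plan is to prove that $H^1_\ar(X, \O_X(D))$ is both compact and discrete as a Hausdorff topological group; since a compact discrete Hausdorff group is automatically finite, the conclusion follows.

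For compactness, I would start from Lemma 2(2): $\A_{12}^\ar(D)/\A_1^\ar(D)$ is compact. Combined with Proposition 3(3), the further quotient $\A_{12}^\ar(D)/(\A_1^\ar(D)+\A_2^\ar(D))$ is compact Hausdorff. The inclusion $(\A_{01}^\ar+\A_{02}^\ar)\cap\A_{12}^\ar(D)\hookrightarrow \A_{12}^\ar(D)$ descends to a continuous injection of $H^1_\ar$ into this compact quotient, and I would verify that its image is closed (hence compact).

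For discreteness, I would establish the algebraic isomorphism
$$H^1_\ar(X,\O_X(D)) \;\cong\; \big(\A_{01}^\ar\cap(\A_{02}^\ar+\A_{12}^\ar(D))\big)\big/(\A_0^\ar+\A_1^\ar(D))$$
by a diagram chase exploiting $\A_{01}^\ar\cap\A_{02}^\ar=\A_0^\ar$: an element $a$ on the right, decomposed as $a=b+c$ with $b\in\A_{02}^\ar$ and $c\in\A_{12}^\ar(D)$, maps to $[c]\in H^1_\ar$, with well-definedness, surjectivity, and the claimed kernel all following by routine tracing of definitions. The right-hand side is a subquotient of $\A_{01}^\ar/\A_1^\ar(D)$, which is discrete by Lemma 2(1); moreover Proposition 3(2) yields closedness of $\A_0^\ar+\A_1^\ar(D)$, so the subquotient is Hausdorff and discrete.

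The main obstacle is reconciling the two resulting topologies on $H^1_\ar$: the quotient topology from the original definition (on the compactness side) and the subquotient topology from the isomorphism above (on the discreteness side). Showing they coincide amounts to verifying that the algebraic isomorphism is a homeomorphism, which reduces to continuity and openness properties of the addition map $\A_{01}^\ar\oplus\A_{02}^\ar\to\A_{01}^\ar+\A_{02}^\ar$ under the ind-pro topology, so that the decomposition $x=a+b$ can be arranged compatibly with the topologies. I expect this to follow from Proposition 1 and the structure of the ind-pro topology, but it constitutes the genuine topological (rather than purely algebraic) content of the argument. Once this identification is in hand, compactness plus discreteness immediately yields finiteness.
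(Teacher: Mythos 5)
Your proposal reproduces the paper's argument essentially verbatim: the paper proves discreteness via the expression $\big(\A_{01}^\ar\cap(\A_{02}^\ar+\A_{12}^\ar(D))\big)\big/\big(\A_0^\ar+\A_1^\ar(D)\big)$ (Lemma 2(1) through the proof of Proposition 3(2)) and compactness via $\big((\A_{01}^\ar+\A_{02}^\ar)\cap\A_{12}^\ar(D)\big)\big/\big(\A_1^\ar(D)+\A_2^\ar(D)\big)$ (Lemma 2(2) together with Propositions 1, 3(1) and 3(3)), then concludes finiteness from ``compact and discrete.'' The topological reconciliation of the two sub-quotients that you flag as the genuine remaining work is in fact passed over silently in the paper, which simply asserts that the second sub-quotient ``is another expression of'' $H^1_\ar(X,\O_X(D))$; so your write-up is, if anything, more candid than the published proof about where the real content of the step lies.
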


\begin{proof}
By proof of Proposition 3(2),  
$\frac{\A_{01}^\ar\cap\big(\A_{02}^\ar+\A_{12}^\ar(D)\big)}{\A_0^\ar+\A_1^\ar(D)}=H^1_\ar(X,\O_X(D))$, is  discrete.
On the other hand, $\frac{\A_{12}^\ar(D)}{\A_1(D)^\ar+\A_2(D)^\ar}$ is compact, since, by Lemma 2(2), $\A_{12}^\ar(D)\big/\A_1(D)^\ar$ is compact.  By Proposition 3(3), resp. Proposition 1, $\A_1(D)^\ar+\A_2(D)^\ar$, resp. $\A_{12}^\ar(D)$, is closed. So $\frac{\A_{12}^\ar(D)}{\A_1(D)^\ar+\A_2(D)^\ar}$ is  Hausdorff, since $\A_{012}^\ar$ is Hausdorff.
By Proposition 3(1), $\A_{01}^\ar+\A_{12}^\ar(D)$  is closed. Hence
$\frac{\A_{12}^\ar(D)\cap\big(\A_{01}^\ar+\A_{12}^\ar(D)\big)}{\A_1(D)^\ar+\A_2(D)^\ar}$ is closed and hence compact. But this later sub-quotient is another expression of $H^1_\ar(X,\O_X(D))$. Hence, $H^1_\ar(X,\O_X(D))$  is both compact and discrete. Therefore,  it  is  finite.
\end{proof}

\begin{cor} For a Weil divisor $E$ on $X$,
 $\A_{0}^\ar+\A_{2}^\ar(E)$  is closed.
\end{cor}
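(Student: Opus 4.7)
The plan is to reuse the argument of Proposition 3(3) essentially verbatim, with the level-one subspace $\A_1^\ar(E)$ there replaced by $\A_0^\ar$. The reason this carries over is that among the three canonical level-one subspaces $\A_0^\ar$, $\A_1^\ar(E)$, $\A_2^\ar(E)$, any two of them meet in the common triple intersection $\A_{01}^\ar \cap \A_{02}^\ar \cap \A_{12}^\ar(E) = H^0_\ar(X,\O_X(E))$, because each of the three level-one subspaces is itself an intersection of two of the level-two subspaces.

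Concretely, I would first note that $\A_0^\ar \cap \A_2^\ar(E) = (\A_{01}^\ar \cap \A_{02}^\ar) \cap (\A_{02}^\ar \cap \A_{12}^\ar(E)) = H^0_\ar(X,\O_X(E))$, which yields a natural continuous surjection $\A_2^\ar(E)/H^0_\ar(X,\O_X(E)) \twoheadrightarrow (\A_0^\ar + \A_2^\ar(E))/\A_0^\ar$. The source is compact by Lemma 4, so the target $(\A_0^\ar + \A_2^\ar(E))/\A_0^\ar$ is a compact subspace of $\A_{012}^\ar/\A_0^\ar$. By Proposition 1, $\A_0^\ar$ is closed in the Hausdorff space $\A_{012}^\ar$, so the quotient $\A_{012}^\ar/\A_0^\ar$ is Hausdorff; therefore the compact image is closed, and pulling back under the continuous projection $\A_{012}^\ar \twoheadrightarrow \A_{012}^\ar/\A_0^\ar$ completes the argument.

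I do not anticipate any genuine obstacle, since this is a direct transcription of the proof of Proposition 3(3): the algebraic input (the pairwise intersection being $H^0_\ar$) is symmetric in the three level-one subspaces $\A_0^\ar, \A_1^\ar(E), \A_2^\ar(E)$, and the remaining ingredients -- compactness of $\A_2^\ar(E)/H^0_\ar(X,\O_X(E))$ via Lemma 4, closedness of $\A_0^\ar$ via Proposition 1, and Hausdorffness of $\A_{012}^\ar$ cited in the paper from [2, Theorem II] -- are all already available. The only sanity check worth performing is the intersection identity in the first step, which is immediate from the definitions $\A_0^\ar := \A_{01}^\ar\cap\A_{02}^\ar$ and $\A_2^\ar(E) := \A_{02}^\ar\cap\A_{12}^\ar(E)$.
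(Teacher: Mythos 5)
Your argument is correct, but it is not the route the paper takes for this corollary. The paper deduces the closedness of $\A_{0}^\ar+\A_{2}^\ar(E)$ from Theorem 5: it rewrites $H^1_\ar(X,\O_X(E))$ as $\big(\A_{02}^\ar\cap(\A_{01}^\ar+\A_{12}^\ar(E))\big)\big/\big(\A_{0}^\ar+\A_{2}^\ar(E)\big)$, notes that the numerator is closed by Proposition 3(1) and Proposition 1, and then uses the finiteness and Hausdorffness of $H^1_\ar$ to force the denominator to be closed. Your proof instead bypasses $H^1_\ar$ entirely and transplants the template of Proposition 3(3) (and of Lemma 7(3), where the paper runs the identical argument with $\A_{01}^\ar$ in place of $\A_{0}^\ar$): the intersection identity $\A_{0}^\ar\cap\A_{2}^\ar(E)=H^0_\ar(X,\O_X(E))$ gives a continuous surjection $\A_{2}^\ar(E)/H^0_\ar(X,\O_X(E))\two\big(\A_{0}^\ar+\A_{2}^\ar(E)\big)/\A_{0}^\ar$, the source is compact by Lemma 4, the target sits in the Hausdorff quotient $\A_{012}^\ar/\A_{0}^\ar$ (Proposition 1), and a compact subset of a Hausdorff space is closed, so pulling back along the projection finishes the proof. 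What your route buys is independence from Theorem 5 and from Proposition 3(1): the corollary could be stated and proved immediately after Lemma 4, and it avoids the slightly delicate inference from ``finite quotient of a closed subgroup'' to ``closed denominator.'' What the paper's route buys is brevity at that point in the exposition, since Theorem 5 has already been established. Both arguments rest on the same foundational facts the paper assumes throughout (continuity of the quotient projection, and Hausdorffness of quotients by closed subgroups), so your proof is a legitimate, and arguably cleaner, alternative.
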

\begin{proof} Note that $H^1(X,\O_X(E))$ can also be written as
$\frac{\A_{02}^\ar\cap\big(\A_{01}^\ar+\A_{12}^\ar(E)\big)}{\A_0^\ar+\A_2(E)^\ar}$. 
By Proposition 3(1),  $\A_{01}^\ar+\A_{12}^\ar(E)$ is closed. Hence, the numerator is also 
closed. But $H^1_\ar$ is finite and Hausdorff, hence the denominator, namely, $\A_{0}^\ar+\A_{2}^\ar(E)$  is 
closed.
\end{proof}

\begin{lem}
Let $E$ be a Weil divisor on $X$. Then we have
\begin{itemize}
\item [(1)]  $\A_{0}^\ar+ \A_{12}^\ar(E)$ is closed;
\item [(2)] $\A_{1}^\ar(E)+\A_{02}^\ar$  is closed;
\item [(3)] $\A_{2}^\ar(E)+\A_{01}^\ar$  is closed.
\end{itemize}
\end{lem}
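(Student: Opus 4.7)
The plan is to realize each of the three sums as the preimage of a closed subset of a Hausdorff topological quotient of $\A_{012}^\ar$; the quotient will be $\A_{012}^\ar$ modulo one of the closed subspaces supplied by Proposition~1 or~3, and the closed subset will be either compact (via Lemma~4) or discrete (via the argument in the proof of Proposition~3(1)).

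For (3) $\A_{2}^\ar(E)+\A_{01}^\ar$, the approach is clean. Project to $G:=\A_{012}^\ar/\A_{01}^\ar$, which is Hausdorff because $\A_{01}^\ar$ is closed (Proposition~1). The induced map $\A_{2}^\ar(E)\to G$ has kernel $\A_{2}^\ar(E)\cap\A_{01}^\ar=H_\ar^0(X,\O_X(E))$ (an easy set-theoretic computation using the definitions), so its image is the continuous image of $\A_{2}^\ar(E)/H_\ar^0(X,\O_X(E))$. By Lemma~4 the latter is compact; hence the image is compact, and compact subsets of a Hausdorff topological group are closed. Pulling back shows $\A_{2}^\ar(E)+\A_{01}^\ar$ is closed.

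For (1) $\A_{0}^\ar+\A_{12}^\ar(E)$, project instead to $G:=\A_{012}^\ar/\A_{12}^\ar(E)$, Hausdorff since $\A_{12}^\ar(E)$ is closed. Since $\A_{0}^\ar\subset\A_{01}^\ar$, the image of $\A_{0}^\ar$ in $G$ lies inside the image of $\A_{01}^\ar$, namely $(\A_{01}^\ar+\A_{12}^\ar(E))/\A_{12}^\ar(E)$. The proof of Proposition~3(1) establishes that this latter subspace is \emph{discrete} in $G$ (via the duality identifying it with the discrete topological dual of the compact $\A_{12}^\ar((\omega)-E)/\A_{1}^\ar((\omega)-E)$). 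Being a subset of a discrete subspace, the image of $\A_{0}^\ar$ is itself discrete in $G$, and a discrete subgroup of a Hausdorff topological group is automatically closed; pulling back gives the closedness of $\A_{0}^\ar+\A_{12}^\ar(E)$.

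Part (2) $\A_{1}^\ar(E)+\A_{02}^\ar$ is the one I expect to cost the most work, because the direct symmetric analog of either strategy above is not quite in place. The natural attempt is to mirror (1) by taking $G:=\A_{012}^\ar/\A_{02}^\ar$, noting that the image of $\A_{1}^\ar(E)\subset\A_{12}^\ar(E)$ lies inside $(\A_{12}^\ar(E)+\A_{02}^\ar)/\A_{02}^\ar$, and trying to show this latter is discrete. This is the $01\leftrightarrow 02$ analog of the assertion used in~(1); the main task will be to rerun the duality argument of Proposition~3(1) with $\A_{01}^\ar$ replaced by $\A_{02}^\ar$ throughout, invoking the adelic duality of~[2] now for the compact quotient $\A_{12}^\ar((\omega)-E)/\A_{2}^\ar((\omega)-E)$. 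Verifying that this role-swapped duality is available in the framework of~[2] is the principal obstacle; once that is in hand, the proof of (2) concludes exactly as in (1).
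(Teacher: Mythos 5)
Parts (1) and (3) of your proposal are sound. Your argument for (3) is exactly the paper's: push $\A_{2}^\ar(E)$ into the Hausdorff quotient $\A_{012}^\ar/\A_{01}^\ar$, note the kernel is $H^0_\ar(X,\O_X(E))$, and invoke Lemma~4. Your argument for (1) takes a genuinely different route from the paper's: the paper projects to $\A_{012}^\ar/(\A_{0}^\ar+\A_{1}^\ar(E))$ and uses the compactness of $\A_{12}^\ar(E)/\A_{1}^\ar(E)$ together with Proposition~3(2), whereas you project to $\A_{012}^\ar/\A_{12}^\ar(E)$ and use the discreteness of $(\A_{01}^\ar+\A_{12}^\ar(E))/\A_{12}^\ar(E)$; that discreteness is exactly what the paper itself asserts and uses in Proposition~3(1) and Theorem~9(1), so your variant is acceptable at the paper's level of rigor (the compactness route is marginally safer, since ``compact subset of Hausdorff is closed'' needs no group structure, while ``discrete subgroup is closed'' does).

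The genuine gap is in (2), and it is not merely the verification you defer --- the role-swapped duality you hope to invoke does not exist, because the indices $1$ and $2$ are not symmetric in this theory. Concretely, $(\A_{12}^\ar(E)+\A_{02}^\ar)/\A_{02}^\ar\simeq \A_{12}^\ar(E)/\A_{2}^\ar(E)$ is neither discrete nor compact: it is the projective limit over $E'\leq E$ of $\A_{12}^\ar(E)/(\A_{2}^\ar(E)+\A_{12}^\ar(E'))$, whose one-curve layers are $\A_{C,01}^\ar/\A_{C,1}^\ar(E|_C)$ --- infinite \emph{discrete} groups --- so the limit is a non-stabilizing prodiscrete group. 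The compact quotients available are $\A_{12}^\ar(E)/\A_{1}^\ar(E)$ (layers $\A_{C,01}^\ar/\A_{C,0}^\ar$, Lemma~2(2)) and $\A_{2}^\ar(E)/H^0_\ar$ (Lemma~4); there is no compact $\A_{12}^\ar((\omega)-E)/\A_{2}^\ar((\omega)-E)$ to dualize, and the image of $\A_{1}^\ar(E)$ in $\A_{012}^\ar/\A_{02}^\ar$, namely $\A_{1}^\ar(E)/H^0_\ar(X,\O_X(E))$, is itself prodiscrete rather than discrete, so the ``discrete subgroup is closed'' device cannot apply. The paper's proof of (2) is accordingly much heavier: it writes $\A_{1}^\ar(D)+\A_{02}^\ar$ via its ind-pro presentation, reduces to the closedness of $(\A_{1}^\ar(D)+\A_{02}^\ar)\cap\A_{12}^\ar(E)$, identifies this for $E\geq D$ with $\A_{1}^\ar(D)+\A_{2}^\ar(E)$, uses Lemma~4 to reduce further to the closedness of $\A_{1}^\ar(D)+H^0_\ar(X,\O_X(E))$, and establishes that by a duality argument exhibiting $\big(\A_{1}^\ar(D)+H^0(X,\O_X(E))\big)/\A_{1}^\ar(D)$ as the discrete dual of a compact quotient --- an argument that additionally needs the closedness of $\A_{01}^\ar+\A_{02}^\ar+\A_{12}^\ar(E)$ from Proposition~8(3), proved independently. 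Some argument of this shape is required; the symmetric shortcut is a dead end.
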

\begin{proof} (1) By Lemma 2(2), $\A_{12}^\ar(E)\big/\A_{1}^\ar(E)$  is compact.
With the natural surjection
$\A_{12}^\ar(E)\big/\A_{1}^\ar(E)\two\frac{ \A_{12}^\ar(E)+\A_{0}^\ar}{\A_{1}^\ar(E)+\A_{0}^\ar}$,
the later quotient space is also compact. Clearly, it is also Hausdorff. By Proposition 3(2), 
the denominator $\A_{1}^\ar(E)+\A_{0}^\ar$ is closed. Hence the numerator 
$\A_{0}^\ar+ \A_{12}^\ar(E)$ is closed as well.

\noindent
(2) Since 
$\A_1^\ar(D)+\A_{02}^\ar={\dis{\lim_{\substack{\longrightarrow\\ D'}}\lim_{\substack{\longleftarrow\\
{E: E\leq D'}}}}}
\frac{\big(\A_1^\ar(D)+\A_{02}^\ar\big)\cap\A_{12}^\ar(D')}
{\big(\A_1^\ar(D)+\A_{02}^\ar\big)\cap\A_{12}^\ar(E)}$, 
it suffices to show that 
$\frac{\big(\A_1^\ar(D)+\A_{02}^\ar\big)\cap\A_{12}^\ar(D')}
{\big(\A_1^\ar(D)+\A_{02}^\ar\big)\cap\A_{12}^\ar(E)}$, or more strongly, 
$\big(\A_1^\ar(D)+\A_{02}^\ar\big)\cap\A_{12}^\ar(E)$ is closed.

We first assume that $E\geq D$. In this case, 
$\big(\A_1^\ar(D)+\A_{02}^\ar\big)\cap\A_{12}^\ar(E)=\A_1^\ar(D)+\A_{2}^\ar(E)$. By Lemma 4,  $\A_2^\ar(E)\big/H^0_\ar(X,\O_X(E))$ is compact. This implies that 
$ \frac{\A_1^\ar(D)+\A_{2}^\ar(E)}{\A_1^\ar(D)+H^0_\ar(X,\O_X(E))}$ is compact, since there is a natural surjection 
$\A_2^\ar(E)\big/H^0_\ar(X,\O_X(E))\two  
\frac{\A_1^\ar(D)+\A_{2}^\ar(E)}{\A_1^\ar(D)+H^0_\ar(X,\O_X(E))}$. 
Thus, it  suffices to show that 
$\A_1^\ar(D)+H^0_\ar(X,\O_X(E))$ is closed. Now, by Lemma 2(2), the quotient space 
$\A_{12}^\ar(D)\big/ \A_{1}^\ar(D)$ is compact. Hence, $\frac{\A_{01}^\ar+\A_{12}^\ar(D)}{\big(\A_{01}^\ar+\A_{12}^\ar(D)\big)\cap\big(\A_{01}^\ar+\A_{02}^\ar+\A_{12}^\ar(E)\big)}$ is compact, from surjection
$\A_{12}^\ar(D)\big/ \A_{1}^\ar(D)\two \frac{\A_{01}^\ar+\A_{12}^\ar(D)}{\big(\A_{01}^\ar+\A_{12}^\ar(D)\big)\cap\big(\A_{01}^\ar+\A_{02}^\ar+\A_{12}^\ar(E)\big)}$.
Furthermore, with the closeness of the spaces  $\A_{01}^\ar+\A_{12}^\ar(D)$, resp. 
$\A_{01}^\ar+\A_{02}^\ar+\A_{12}^\ar(E)$, as proved in Proposition 3(1), resp. Proposition 8(3) below, whose proof is independent of this argument, 
we conclude that this latest quotient space 
$\frac{\A_{01}^\ar+\A_{12}^\ar(D)}
{\big(\A_{01}^\ar+\A_{12}^\ar(D)\big)\cap\big(\A_{01}^\ar+\A_{02}^\ar+\A_{12}^\ar(E)\big)}$ 
is nothing but the topological dual of $\big(\A_1^\ar(D)+H^0(X,\O_X(E))\big)\big/\A_1^\ar(D)$. 
So $\big(\A_1^\ar(D)+H^0(X,\O_X(E))\big)\big/\A_1^\ar(D)$ is discrete. Therefore, 
$\A_1^\ar(D)+H^0(X,\O_X(E))$ is closed, as required.

To complete the proof, we next treat general $E$. Fix a Weil divisor $D'$ on $X$ such that 
$D'\geq D,\, D'\geq E$. By the special case just proved, we have 
$\big(\A_1^\ar(D)+\A_{02}^\ar\big)\cap\A_{12}^\ar(D')$ is closed.  Since
$\big(\A_1^\ar(D)+\A_{02}^\ar\big)\cap\A_{12}^\ar(E)
=\big(\big(\A_1^\ar(D)+\A_{02}^\ar\big)\cap\A_{12}^\ar(D')\big)\cap \A_{12}^\ar(E)$, 
$\big(\A_1^\ar(D)+\A_{02}^\ar\big)\cap\A_{12}^\ar(E)$ is closed. 

\noindent
(3) By Lemma 4, $\A_{2}^\ar(E)\big/H^0_\ar(X,\O_X(E))$ is compact.
With the natural surjection 
$\A_{2}^\ar(E)\big/H^0_\ar(X,\O_X(E))\two \big(\A_{2}^\ar(E)+\A_{01}^\ar\big)\big/\A_{01}^\ar$, 
this latest space is also compact. Clearly, it is Hausdorff as well. Consequently, with denominator 
being closed, the numerator $\A_{2}^\ar(E)+\A_{01}^\ar$ is closed.
\end{proof}

\begin{prop} Let $E$ be a Weil divisor on $X$. Then we have
\begin{itemize}
\item [(1)]  $\A_{01}^\ar+\A_{02}^\ar$ is closed;
\item [(2)] $\A_{02}^\ar+\A_{12}^\ar(E)$ is closed;
\item [(3)] $\A_{01}^\ar+\A_{02}^\ar+\A_{12}^\ar(E)$ is closed;
\item [(4)] $\A_{0}^\ar+\A_{1}^\ar(E)+\A_{2}^\ar(E)$ is closed.
\end{itemize}
\end{prop}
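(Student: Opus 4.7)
The plan is to treat all four closures with a single recipe: in each case I choose a closed subgroup $N\subseteq S$ (where $S$ is the sum in question), supplied by an earlier result, so that the quotient $\A_{012}^\ar/N$ is Hausdorff; I then show the image of the remaining summand in this quotient is either compact or a finite union of closed translates. Since compact subsets of Hausdorff groups are closed, the preimage $S$ is closed in $\A_{012}^\ar$ in every case.

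For (1) I would use the finiteness of $H^1_\ar$ (Theorem 5) through the ind-pro topology. Because $\A_{012}^\ar=\lim_{\raw D}\A_{12}^\ar(D)$, a subset is closed in $\A_{012}^\ar$ iff its intersection with each $\A_{12}^\ar(D)$ is closed in $\A_{12}^\ar(D)$. From the definition of $H^1_\ar$, the intersection $(\A_{01}^\ar+\A_{02}^\ar)\cap\A_{12}^\ar(D)$ modulo $\A_1^\ar(D)+\A_2^\ar(D)$ is exactly the finite group $H^1_\ar(X,\O_X(D))$, so the intersection is a finite union of translates of $\A_1^\ar(D)+\A_2^\ar(D)$, which is closed by Proposition 3(3). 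A finite union of closed cosets is closed, establishing (1).

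For (2) I would take $N:=\A_1^\ar(E)+\A_{02}^\ar$, closed by Lemma 7(2). A short modular-law computation (if $x\in\A_{12}^\ar(E)$ writes as $a+b$ with $a\in\A_1^\ar(E)$ and $b\in\A_{02}^\ar$, then $b=x-a\in\A_{12}^\ar(E)\cap\A_{02}^\ar=\A_2^\ar(E)$) gives $\A_{12}^\ar(E)\cap N=\A_1^\ar(E)+\A_2^\ar(E)$, so the image of $\A_{12}^\ar(E)$ in $\A_{012}^\ar/N$ is a continuous quotient of the compact space $\A_{12}^\ar(E)/\A_1^\ar(E)$ (Lemma 2(2)), hence compact, hence closed. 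For (3) I take $N:=\A_{01}^\ar+\A_{02}^\ar$, closed by (1); since $\A_1^\ar(E)\subseteq\A_{01}^\ar\subseteq N$, the image of $\A_{12}^\ar(E)$ in $\A_{012}^\ar/N$ is again a continuous quotient of $\A_{12}^\ar(E)/\A_1^\ar(E)$, and the same compact-in-Hausdorff argument applies. For (4) I take $N:=\A_0^\ar+\A_1^\ar(E)$, closed by Proposition 3(2); using $\A_{01}^\ar\cap\A_{02}^\ar=\A_0^\ar$ and $\A_0^\ar\cap\A_{12}^\ar(E)=H^0_\ar(X,\O_X(E))$, a short calculation yields $\A_2^\ar(E)\cap N=H^0_\ar(X,\O_X(E))$, so the image of $\A_2^\ar(E)$ in $\A_{012}^\ar/N$ is $\A_2^\ar(E)/H^0_\ar(X,\O_X(E))$, which is compact by Lemma 4.

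The hard part will be (1): it is the only step that relies on a nontrivial input beyond the recipe, namely the finiteness of $H^1_\ar$ together with the characterization of closed sets via the ind-pro topology. Parts (2)--(4) are then routine variations of the compact-in-Hausdorff-is-closed pattern already exploited in Lemma 7, once the correct closed denominator $N$ has been identified and the corresponding modular-law intersection identities verified.
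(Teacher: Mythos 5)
Your proposal is correct and follows essentially the same route as the paper: part (1) via the finiteness of $H^1_\ar$ (Theorem 5) together with the closedness of $\A_1^\ar(E)+\A_2^\ar(E)$ (Proposition 3(3)) and the ind-pro characterization of closed sets, and parts (2)--(4) via the compact-image-in-a-Hausdorff-quotient pattern with exactly the same closed denominators (Lemma 7(2), part (1), and Proposition 3(2), respectively) and the same compactness inputs (Lemma 2(2) and Lemma 4). The modular-law identifications you add are correct but not strictly needed, since a surjection from the relevant compact quotient already suffices.
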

\begin{proof}
(1) By Theorem 5, 
$H^1(X,\O_X(E))=\frac{\A_{12}^\ar(E)\cap\big(\A_{01}^\ar+\A_{02}^\ar\big)}
{\A_1(E)^\ar+\A_2(E)^\ar}$, is finite. Hence, the numerator 
$\A_{12}^\ar(E)\cap\big(\A_{01}^\ar+\A_{12}^\ar(E)\big)$ is closed since the denominator
$\A_1(E)^\ar+\A_2(E)^\ar$ is closed by Proposition 3(3). Consequently, for any two Weil divisors 
$D, \,E$ with $D\geq E$, viewing as subspaces of $\A_{12}^\ar(D)\big/\A_{12}^\ar(E)$, the space 
$\frac{\A_{12}^\ar(D)\cap\big(\A_{01}^\ar+\A_{02}^\ar\big)}
{\A_{12}^\ar(E)\cap\big(\A_{01}^\ar+\A_{02}^\ar\big)}$ is closed.
On the other hand, by definition,
$$
\A_{01}^\ar+\A_{02}^\ar
=\lim_{\longrightarrow D}\lim_{\longleftarrow E: E\leq D}
\frac{\A_{12}^\ar(D)\cap\big(\A_{01}^\ar+\A_{02}^\ar\big)}
{\A_{12}^\ar(E)\cap\big(\A_{01}^\ar+\A_{02}^\ar\big)}.
$$ 
Therefore, by definition of  ind-pro topology, the subspace $\A_{01}^\ar+\A_{02}^\ar$ is 
closed.

\noindent
(2) By Lemma 2(2), $\A_{12}^\ar(E)\big/\A_{1}^\ar(E)$ is compact. With natural surjection
$\A_{12}^\ar(E)\big/\A_{1}^\ar(E)
\two\frac{ \A_{02}^\ar+\A_{12}^\ar(E)}{ \A_{02}^\ar+\A_{1}^\ar(E)}$, 
the latest space is compact as well. Thus as above, since, by Lemma 7(2), the denominator is 
closed, the numerator $\A_{02}^\ar+\A_{12}^\ar(E)$ is closed.

\noindent
(3) By Lemma 2(2), $\A_{12}^\ar(E)\big/\A_{1}^\ar(E)$ is compact. With natural surjection
$\A_{12}^\ar(E)\big/\A_{1}^\ar(E)\two\frac{ \A_{01}^\ar+\A_{02}^\ar+\A_{12}^\ar(E)}
{ \A_{01}^\ar+\A_{02}^\ar+\A_{1}^\ar(E)}$, the latest space is compact as well. But 
$\A_{01}^\ar\supset \A_{1}^\ar(E)$, hence the denominator is simply $\A_{01}^\ar+\A_{02}^\ar$. 
By (1), it is closed. Consequently, as above,  the numerator 
$\A_{01}^\ar+\A_{02}^\ar+\A_{12}^\ar(E)$ is closed.

\noindent
(4) By  Lemma 4, $\A_{2}^\ar(E)\big/H^0_\ar(X,\O_X(E))$ is compact.
By definition, we have $H^0_\ar(X,\O_X(E))=\A_{0}^\ar\cap \A_{1}^\ar(E)\cap\A_{2}^\ar(E)$. 
Hence, from composition of natural surjections 
$\frac{\A_{2}^\ar(E)}{H^0_\ar(X,\O_X(E))}\two 
\frac{\A_{2}^\ar(E)}{\big(\A_{0}^\ar+ \A_{1}^\ar(E)\big)\cap\A_{2}^\ar(E))}\two 
\frac{\A_{0}^\ar+\A_{1}^\ar+\A_{2}^\ar(E)}{\A_{0}^\ar+\A_{1}^\ar(E)}$, 
we conclude that $ \frac{\A_{0}^\ar+\A_{1}^\ar(E)+\A_{2}^\ar(E)}{\A_{0}^\ar+\A_{1}^\ar(E)}$ is compact. 
By Proposition 3(2), $\A_{0}^\ar+\A_{1}^\ar(E)$ is closed. Consequently, $\A_{0}^\ar+\A_{1}^\ar(E)+\A_{2}^\ar(E)$ is closed as well.
\end{proof}

\begin{thm} Let $D$ be a Weil divisor on $X$. Then we have
\begin{itemize}
\item [(1)] $\A_0^\ar$ is discrete. In particular, $H^0_\ar(X,\O_X(D))$ is discrete;
\item [(2)]  $H^2(X,\O_X(D))$ is compact .
\end{itemize}
\end{thm}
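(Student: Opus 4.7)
The plan is to prove (1) by producing an open neighborhood of $0$ in $\A_0^\ar$ equal to $\{0\}$, and to prove (2) by realizing $H^2_\ar(X,\O_X(D))$ as the continuous image of a compact Hausdorff space.

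For (1), I would first fix a Weil divisor $E$ on $X$ whose Arakelov degree is negative enough in every horizontal and vertical component (with a sufficiently negative archimedean contribution) so that $H^0_\ar(X,\O_X(E))=\A_{01}^\ar\cap\A_{02}^\ar\cap\A_{12}^\ar(E)=\{0\}$. Such $E$ exists because the imposed conditions force a rational function on $X$ to have nonnegative order along every curve and bounded archimedean norm, leaving only $0$ once $E$ is negative enough. By Lemma~2(1), $\A_{01}^\ar/\A_1^\ar(E)$ is discrete, i.e.\ $\A_1^\ar(E)$ is open in $\A_{01}^\ar$; by Proposition~1 the inclusion $\A_0^\ar=\A_{01}^\ar\cap\A_{02}^\ar\hookrightarrow \A_{01}^\ar$ is continuous, so
$$
\A_0^\ar\cap \A_1^\ar(E)=H^0_\ar(X,\O_X(E))=\{0\}
$$
is an open neighborhood of $0$ in $\A_0^\ar$. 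Hence $\A_0^\ar$ is discrete as a topological group, and since $H^0_\ar(X,\O_X(D))\subset \A_0^\ar$ carries the subspace topology, it is discrete as well.

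For (2), Proposition~8(3) guarantees that $\A_{01}^\ar+\A_{02}^\ar+\A_{12}^\ar(D)$ is closed in the Hausdorff space $\A_{012}^\ar$, making $H^2_\ar(X,\O_X(D))$ Hausdorff. Next I would choose $D'\geq D$ with
$$
\A_{012}^\ar=\A_{01}^\ar+\A_{02}^\ar+\A_{12}^\ar(D'),
$$
that is, $H^2_\ar(X,\O_X(D'))=0$; such $D'$ is produced by arithmetic Serre vanishing (equivalently, strong approximation on the arithmetic surface) for $D'$ of sufficiently positive Arakelov degree. Then the composition $\A_{12}^\ar(D')\hookrightarrow \A_{012}^\ar\twoheadrightarrow H^2_\ar(X,\O_X(D))$ is a continuous surjection whose kernel contains $\A_1^\ar(D')=\A_{01}^\ar\cap\A_{12}^\ar(D')$, so it factors through the compact quotient $\A_{12}^\ar(D')/\A_1^\ar(D')$ supplied by Lemma~2(2). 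A continuous surjection from a compact space onto a Hausdorff space has compact image, so $H^2_\ar(X,\O_X(D))$ is compact.

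The main obstacle is producing the vanishing divisor $D'$ in (2): one needs the adelic decomposition $\A_{012}^\ar=\A_{01}^\ar+\A_{02}^\ar+\A_{12}^\ar(D')$ for $D'$ of sufficiently positive degree, which is the arithmetic counterpart of Serre vanishing for ample line bundles. In the present framework this should follow by combining strong approximation at the finite places (applied to the finite adelic summand $\A_{012}^\fin$) with a direct archimedean argument on $\A_X^\infty=\A_{X_F}\widehat\otimes_\Q\R$, both of which are available in the formalism of [2]. Once this input is in hand, the rest of the proof is formal, using only Lemma~2 and Proposition~8(3).
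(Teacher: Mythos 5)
Part (1) of your argument is essentially sound and is in fact a slightly more direct route than the paper's: you use Lemma~2(1) (openness of $\A_1^\ar(E)$ in $\A_{01}^\ar$) together with the vanishing of $H^0_\ar(X,\O_X(E))$ for $E$ sufficiently negative, whereas the paper reaches the same discreteness by dualizing the compact quotient $\A_{12}^\ar(E)/\A_1^\ar(E)$ of Lemma~2(2) and invoking the closedness of $\A_{01}^\ar+\A_{12}^\ar(E)$ from Proposition~3(1). Both arguments rest on the same vanishing statement $H^0(X,\O_X(E))\subset H^0(X_F,\O_{X_F}(E_F))=0$ for $E_F$ negative enough, so this part is acceptable.

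Part (2) has a genuine gap. Your entire argument hinges on producing $D'\geq D$ with $\A_{012}^\ar=\A_{01}^\ar+\A_{02}^\ar+\A_{12}^\ar(D')$, i.e.\ $H^2_\ar(X,\O_X(D'))=0$, and this input is neither proved in the paper nor available from [2] as cited; you yourself flag it as the ``main obstacle'' and only assert that it ``should follow'' from strong approximation plus an archimedean argument. Worse, the natural proof of this vanishing in the present framework is via the duality $\widehat{H^2_\ar(X,\O_X(D'))}\simeq H^0_\ar(X,\O_X(K_X-D'))$ of Theorem~10 together with Pontryagin theory, but Theorem~10 is proved \emph{after} and \emph{using} Theorem~9, so invoking it here would be circular; and a Pontryagin argument would in any case already presuppose that $H^2_\ar$ is locally compact, which is close to what you are trying to prove. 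The paper avoids this entirely: since $\A_{01}^\ar+\A_{02}^\ar$ is closed (Proposition~8(1)) and $(\A_{01}^\ar+\A_{02}^\ar)^\perp=\A_{01}^\ar\cap\A_{02}^\ar=\A_0^\ar$ is discrete by part~(1), the quotient $\A_{012}^\ar/(\A_{01}^\ar+\A_{02}^\ar)$ is the topological dual of a discrete group, hence compact, and $H^2_\ar(X,\O_X(D))$ is its continuous Hausdorff quotient, hence compact. You should replace your vanishing step by this duality argument (the residue pairing of Proposition~11(3) and Lemma~12 supply exactly what is needed), at which point no unproved Serre-type vanishing is required.
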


\begin{proof} (1) By Lemma 2(2), $\A_{12}^\ar(E)\big/\A_{1}^\ar(E)$ is compact. Taking its 
topological dual, we have $\big(\A_{01}^\ar+ \A_{12}^\ar(E)\big)\big/\A_{12}^\ar(E)$ is discrete,  
since $\A_{1}^\ar(E)=\A_{01}^\ar\cap \A_{12}^\ar(E)$ and, by Proposition 3(1), 
$\A_{01}^\ar+ \A_{12}^\ar(E)$ is closed. Consequently,  
$\big(\A_{0}^\ar+ \A_{12}^\ar(E)\big)\big/\A_{12}^\ar(E)$ is discrete, since we have a natural 
injection 
$\big(\A_{0}^\ar+ \A_{12}^\ar(E)\big)\big/\A_{12}^\ar(E)
\hookrightarrow \big(\A_{01}^\ar+ \A_{12}^\ar(E)\big)\big/\A_{12}^\ar(E)$. In particular, 
$\A_{0}^\ar\big/H^0(X,\O_X(E))$ is discrete since, by the second isomorphism theorem, there is 
a natural isomorphism 
$\big(\A_{0}^\ar+ \A_{12}^\ar(E)\big)\big/\A_{12}^\ar(E)\simeq \A_{0}^\ar\big/H^0(X,\O_X(E))$. 
By definition, $H^0(X,\O_X(E))\subset H^0(X_F,\O_{X_F}(E_F))$, where $\O_{X_F}(E_F)$ 
denotes the restriction of $\O_X(E)$ to the generic fiber $X_F$ of arithmetic surface
$X\to \mathrm{Spec}\, \O_F$. Hence, we may choose $E$ with negative enough 
$\O_{X_F}(E_F)$ so as to get a vanishing $H^0(X,\O_X(E))$. This then implies that 
$\A_0^\ar$ and hence also $H^0(X,\O_X(D))$ are discrete. 
 
\noindent
(2) Taking  topological dual, we see that $\A_{012}^\ar\big/\big(\A_{01}^\ar+\A_{02}^\ar\big)$ is 
compact, since, by Proposition 8(1),  $\A_{01}^\ar+\A_{02}^\ar$ is closed.  This then implies that 
$H^2(X,\O_X(D))$ is compact, since we have a natural surjection 
$\A_{012}^\ar\big/\big(\A_{01}^\ar+\A_{02}^\ar\big)\two 
\A_{012}^\ar\big/\big(\A_{01}^\ar+\A_{02}^\ar+\A_{12}^\ar(D)\big)=H^2(X,\O_X(D))$.
\end{proof}

\section{Topological Duality of $H_\ar^i(X,\O_X(D))$'s}

In this section, we use adelic theory for arithmetic surfaces developed in [2] and
closeness of various natural subspaces of $\A_X^\ar=\A_{012}^\ar$ proved in the previous section to establish the following fundamental 

\begin{thm}
Let $X$ be an arithmetic surface with a canonical divisor $K_X$ and $D$ be a Weil divisor on $X$. Then we have the following
topological dualities
$$
\widehat {H_\ar^i(X,\O_X(D))}\simeq H_\ar^{2-i}(X,\O_X(K_X-D))\qquad i=0,1,2.
$$
\end{thm}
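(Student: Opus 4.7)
My plan is to derive Theorem 9 from a non-degenerate residue pairing on $\A_{012}^\ar$ under which this space is Pontryagin self-dual in the ind-pro sense, combined with the orthogonality of the three canonical level-two subspaces. Specifically, I would invoke the pairing assembled from the two-dimensional Parshin-Beilinson residue symbol at the finite places together with the archimedean Fourier pairing, as developed in [2,3], and verify the orthogonality relations
$$(\A_{01}^\ar)^\perp = \A_{01}^\ar, \quad (\A_{02}^\ar)^\perp = \A_{02}^\ar, \quad (\A_{12}^\ar(D))^\perp = \A_{12}^\ar(K_X - D).$$
The first two are the higher-dimensional adelic reciprocity laws, and the third is a direct consequence of $\ord_C(\omega) = \ord_C(K_X)$ for a fixed non-zero rational 2-form $\omega$ together with the definition of $\A_{12}^\ar$.

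Once self-duality and these three orthogonalities are in hand, each case of Theorem 9 becomes a formal Pontryagin computation whose sole non-trivial input at every step is the closeness of some sum of canonical subspaces, all of which have been assembled in Section 2. For $i = 0$, I apply $\widehat{H} \simeq \A_{012}^\ar / H^\perp$ to the closed discrete subgroup $H = H_\ar^0(X, \O_X(D)) = \A_{01}^\ar \cap \A_{02}^\ar \cap \A_{12}^\ar(D)$. Using $(A \cap B \cap C)^\perp = A^\perp + B^\perp + C^\perp$, which is valid because the right-hand sum is closed by Proposition 8(3), the orthogonal equals $\A_{01}^\ar + \A_{02}^\ar + \A_{12}^\ar(K_X - D)$, and its quotient in $\A_{012}^\ar$ is by definition $H_\ar^2(X, \O_X(K_X - D))$. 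The case $i = 2$ is perfectly symmetric, again using Proposition 8(3).

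For the middle cohomology I write $H_\ar^1(X, \O_X(D)) = A/B$ with numerator $A = (\A_{01}^\ar + \A_{02}^\ar) \cap \A_{12}^\ar(D)$ closed by Proposition 8(1) and Proposition 1, and denominator $B = \A_1^\ar(D) + \A_2^\ar(D)$ closed by Proposition 3(3); the quotient is finite by Theorem 5. The identity $\widehat{A/B} \simeq B^\perp / A^\perp$, together with the closeness of $\A_{01}^\ar + \A_{12}^\ar(K_X - D)$, $\A_{02}^\ar + \A_{12}^\ar(K_X - D)$ and $\A_0^\ar + \A_{12}^\ar(K_X - D)$ (Proposition 3(1), Proposition 8(2), and Lemma 7(1) respectively), yields
$$\widehat{H_\ar^1(X, \O_X(D))} \simeq \frac{\bigl(\A_{01}^\ar + \A_{12}^\ar(K_X - D)\bigr) \cap \bigl(\A_{02}^\ar + \A_{12}^\ar(K_X - D)\bigr)}{\A_0^\ar + \A_{12}^\ar(K_X - D)}.$$
A direct diagram chase then produces a natural algebraic isomorphism of this sub-quotient with the analogous expression $((\A_{01}^\ar + \A_{02}^\ar) \cap \A_{12}^\ar(K_X - D))/(\A_1^\ar(K_X - D) + \A_2^\ar(K_X - D))$ defining $H_\ar^1(X, \O_X(K_X - D))$; since both sides are finite discrete groups, topological isomorphism is automatic.

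The main obstacle I expect is the foundational step of assembling the finite-place Parshin residue symbol and the archimedean Fourier pairing into a genuine Pontryagin self-duality on the ind-pro space $\A_{012}^\ar$, together with the verification of the reciprocity $(\A_{01}^\ar)^\perp = \A_{01}^\ar$ and its companion for $\A_{02}^\ar$. Once this is granted from the adelic framework of [2,3], the remainder of the argument is bookkeeping, and every closeness statement invoked along the way is precisely one of the results collected in Section 2.
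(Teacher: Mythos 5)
Your proposal is correct and follows essentially the same route as the paper: the residue pairing with the orthogonality relations $(\A_{01}^\ar)^\perp=\A_{01}^\ar$, $(\A_{02}^\ar)^\perp=\A_{02}^\ar$, $(\A_{12}^\ar(D))^\perp=\A_{12}^\ar(K_X-D)$, combined with the Pontryagin identities for closed subgroups whose sums are closed, with every closedness hypothesis supplied by Section 2. The only cosmetic difference is in the $i=1$ case, where the paper first rewrites $H^1_\ar$ via the symmetric expressions of [2, Proposition 16] and then dualizes, while you dualize the definition directly and finish with the three-subgroup isomorphism $\bigl((A+C)\cap(B+C)\bigr)/\bigl(C+(A\cap B)\bigr)\simeq\bigl((A+B)\cap C\bigr)/\bigl(A\cap C+B\cap C\bigr)$ — the same bookkeeping in a different order.
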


\noindent
{\it Remark.} A proof of this theorem is stated in [2, \S3.2.4]. 
However, a key condition on closeness of two sum spaces used is omitted in a well-known 
lemma cited there, e.g. Lemma 12 below. This was pointed out by Osipov at the end of January 
2016. Now with such closeness confirmed in \S 2, we can complete our proof.

Recall that
there is  a natural global residue pairing on 
$\A_{X,012}^\ar$, introduced using local residue theory. Indeed, by [2,\S2.2], for a fixed 
non-zero rational differential $\omega$ on $X$, we can define a global pairing on 
$\A_{X,012}^\ar$ with respect to $\omega$ by
$$
\begin{matrix}
\langle\cdot,\cdot\rangle_\omega:\quad \A_{X,012}^\ar\times \A_{X,012}^\ar&\longrightarrow& \mathbb S^1\\
\big((f_{C,x},f_{P,\sigma}),(g_{C,x},g_{P,\sigma})\big)&\mapsto
&\sum_{C\subset X, x\in C:\,\pi(x)\in S_\fin}\Res_{C,x}(f_{C,x}g_{C,x}\omega)\\
&&\qquad\ \ +\sum_{P\in X_F}\sum_{\sigma\in S_\infty}
\Res_{P,\sigma}(f_{P,\sigma}g_{P,\sigma}\omega).
\end{matrix}
$$
Here $\Res_{C,x}$, resp. $\Res_{P,\sigma}$, denotes the local pairing on $X$, resp. on 
$X_\sigma$. Moreover, we have

\begin{prop} Let $X$ be an arithmetic surface, $D$ a Weil divisor  and $\omega$  a non-zero 
rational differential on $X$. Then we have

\begin{itemize}
\item [(1)] ([2, Lemma 11])  The pairing $\langle\cdot,\cdot\rangle_\omega$  is 
well defined;

\item [(2)] ([2, Proposition 12])  The pairing $\langle\cdot,\cdot\rangle_\omega$  is non-degenerate;

\item [(3)] ([2, Proposition 15]) With respect to  $\langle\cdot,\cdot\rangle_\omega$, we have
$$\big(\A_{X,01}^{\ar}\big)^\perp\!\!=\!\A_{X,01}^{\ar},\ \big(\A_{X,02}^{\ar}\big)^\perp\!\!=\!\A_{X,02}^{\ar},\  \big(\A_{X,12}^{\ar}(D)\big)^\perp\!\!=\!\A_{X,12}^{\ar}\big((\omega)-D\big).$$
\end{itemize}
\end{prop}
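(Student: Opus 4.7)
The plan is to treat the three parts in order, since they draw on progressively deeper input: local residue theory on two-dimensional local fields, the reciprocity laws on curves and on $\Spec\,\O_F$, and the archimedean completion that makes the arithmetic adelic theory work uniformly.

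For part (1), I would first check that the defining sum has only finitely many nonzero terms for each $(f,g)$. Via the ind-pro presentation, $f\in\A_{12}^\ar(D_1)$ and $g\in\A_{12}^\ar(D_2)$ for some Weil divisors $D_i$. For every irreducible curve $C$ outside the finite set $\mathrm{supp}(D_1)\cup\mathrm{supp}(D_2)\cup\mathrm{supp}((\omega))$ one has $\ord_C(f_{C,x}g_{C,x}\omega)\geq 0$, which forces the inner sum $\sum_{x\in C}\Res_{C,x}(f_{C,x}g_{C,x}\omega)$ to vanish term-by-term. For the finitely many remaining $C$, the standard Parshin--Beilinson adelic restraints ensure that only finitely many $x\in C$ contribute nonzero point-residues. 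The archimedean part is automatically a finite sum by the construction of $\A_X^\infty$ as a limit of finite-dimensional real vector spaces. Bilinearity into $\mathbb S^1$ is then routine.

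For part (2), I would reduce to local non-degeneracy. At each flag $(C,x)$ the pairing $(u,v)\mapsto \Res_{C,x}(uv\omega)$ on the two-dimensional local field $k(X)_{C,x}$ is non-degenerate (Tate-style residue duality), and the analogous statement at each archimedean place uses the classical Fourier self-duality on $\mathbb R$. Given a nonzero $f\in\A_{012}^\ar$, pick a flag with $f_{C,x}\neq 0$ and a local $v$ pairing nontrivially with it; because the ind-pro structure permits compactly supported adelic test elements, $v$ extends by zero to an element of $\A_{012}^\ar$ whose global pairing with $f$ is nonzero.

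For part (3), I would exploit the two reciprocity laws. If $f=(f_C), g=(g_C)\in\A_{01}^\ar$, then along each curve $C$ the product $f_Cg_C\omega$ restricts to a single rational differential on $C$, whose sum of residues at all $x\in C$ vanishes by the residue theorem on $C$; summing over $C$ gives $\langle f,g\rangle_\omega=0$, so $\A_{01}^\ar\subseteq(\A_{01}^\ar)^\perp$. The symmetric argument for $\A_{02}^\ar$ pushes the residues down to $\Spec\,\O_F$ and invokes the number-field product formula, which accounts for why the finite and archimedean components are bundled together in the definition of the pairing. The reverse inclusions are obtained from part (2): the pairing identifies $\A_{012}^\ar/\A_{01}^\ar$ with the topological dual of $\A_{01}^\ar$, which together with $\A_{01}^\ar\subseteq(\A_{01}^\ar)^\perp$ forces equality. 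Finally, $\A_{12}^\ar((\omega)-D)\subseteq(\A_{12}^\ar(D))^\perp$ is immediate from the order inequalities, and the reverse inclusion is again local: the annihilator of $\{u:\ord_C(u)\geq -\ord_C(D)\}$ under the local residue pairing twisted by $\omega$ is exactly $\{v:\ord_C(v)\geq -\ord_C((\omega)-D)\}$, which is classical in dimension two.

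The main obstacle is the archimedean bookkeeping. The infinite places give spaces defined as projective limits $\A_{X_F,1}(D_F)\widehat\otimes_\Q\R$, so to commute taking perps with these limits one needs the closedness results of Section 2 (together with the Hausdorff property from Proposition 1). The archimedean analogue of the residue theorem on $C$ must be assembled from the classical Cauchy/Stokes identity on the Riemann surface $X_\sigma$ combined with the product formula, and verifying that these pieces interact cleanly with the diagonal embeddings $k(X)\hookrightarrow\A_{X_F}\hookrightarrow\A_X^\ar$ and with the ind-pro topology is the genuinely technical point — which is why the detailed local-global compatibilities of [2] are invoked.
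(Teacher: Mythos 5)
The paper itself offers no proof of this proposition: all three parts are imported verbatim from [2] (Lemma 11, Proposition 12 and Proposition 15 there), so there is no internal argument to compare yours against. Judged on its own terms, your sketches of (1) and (2) follow the standard Parshin--Beilinson/Osipov route and are plausible, as are the easy inclusions in (3): $\A_{X,01}^{\ar}\subseteq(\A_{X,01}^{\ar})^\perp$ via the residue theorem on each curve $C$, $\A_{X,02}^{\ar}\subseteq(\A_{X,02}^{\ar})^\perp$ via reciprocity along the fibres together with the product formula, and $(\A_{X,12}^{\ar}(D))^\perp=\A_{X,12}^{\ar}((\omega)-D)$ by a componentwise local computation (plus lattice duality at the archimedean places).

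The genuine gap is in your reverse inclusions for $\A_{01}^{\ar}$ and $\A_{02}^{\ar}$. You assert that the pairing ``identifies $\A_{012}^{\ar}/\A_{01}^{\ar}$ with the topological dual of $\A_{01}^{\ar}$'' and that this, combined with isotropy, forces $(\A_{01}^{\ar})^\perp=\A_{01}^{\ar}$. But that identification is precisely equivalent to the equality you are trying to prove: a priori the pairing only identifies $\A_{012}^{\ar}/(\A_{01}^{\ar})^\perp$ with a subgroup of $\widehat{\A_{01}^{\ar}}$, and non-degeneracy of the ambient pairing never upgrades an isotropic subgroup to a self-perpendicular one (an isotropic line in a four-dimensional symplectic space has a three-dimensional perp). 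The actual argument in [2] and in Osipov--Parshin computes the annihilator by descending through the ind-pro filtration: using $\A_1^{\ar}(D)=\A_{01}^{\ar}\cap\A_{12}^{\ar}(D)$ and the already-established annihilator of $\A_{12}^{\ar}(D)$, the statement $(\A_{01}^{\ar})^\perp\subseteq\A_{01}^{\ar}$ reduces to the self-duality $k(C)^\perp=k(C)$ inside the adeles of each curve $C$ (respectively its arithmetic-curve analogue for $\A_{02}^{\ar}$), which is a global Riemann--Roch/approximation input, not a formal consequence of non-degeneracy. Without supplying that input, your part (3) does not close.
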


To apply this result, we use the following well-known

\begin{lem} 
With respect to a continue, non-degenerate pairing $\langle\cdot,\cdot\rangle$ on 
a topological space $\mathcal W$, e.g. $\A_{X,012}^\ar$, we have
\begin{itemize}
\item [(1)] If $\,W_1$ and $W_2$  are closed subgroups of $\mathcal W$ such that the spaces
$W_1+W_2$ and $W_1^\perp+ W_2^\perp$ are closed, then 
$$
(W_1+W_2)^\perp=W_1^\perp\cap W_2^\perp\qquad\mathrm{ and}\qquad
(W_1\cap W_2)^\perp=W_1^\perp+ W_2^\perp;
$$
\item [(2)] If  $\,W$ is a closed subgroup of $\mathcal W$, then, algebraically and topologically,
$$
(W^\perp)^\perp=W\qquad\mathrm{ and}\qquad W\simeq \,\widehat {\ \mathcal W\big/ W^\perp\ }.
$$ 
\end{itemize}
\end{lem}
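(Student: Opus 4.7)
The plan is to anchor the argument in the reflexivity identity $(W^\perp)^\perp = W$ of part (2); once this is in place, the nontrivial half of part (1) follows by a short formal manipulation, while the other half is pure bilinearity.

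First I would dispose of the straightforward pieces. The identity $(W_1+W_2)^\perp = W_1^\perp \cap W_2^\perp$ needs no closedness hypothesis: bilinearity gives that a vector annihilates every sum $w_1+w_2$ iff it annihilates $W_1$ and $W_2$ separately. Similarly, $W \subseteq (W^\perp)^\perp$ is immediate from the definition of $\perp$, and the pairing descends to a continuous non-degenerate pairing $W \times (\mathcal W/W^\perp) \to \mathbb S^1$.

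The core step is the reverse inclusion $(W^\perp)^\perp \subseteq W$ for closed $W$. Given $w_0 \notin W$, since $W$ is closed in the Hausdorff space $\mathcal W$ (which $\A_{X,012}^\ar$ is by [2, Theorem II]), the quotient $\mathcal W/W$ is Hausdorff and $w_0 + W \neq 0$. A continuous character separating $w_0+W$ from $0$ on $\mathcal W/W$ pulls back to a continuous character of $\mathcal W$ vanishing on $W$; using continuity and non-degeneracy of $\langle\cdot,\cdot\rangle$, such a character is realized as $\langle\cdot, y\rangle$ for some $y \in \mathcal W$, necessarily in $W^\perp$, producing $\langle w_0, y\rangle \neq 0$ and contradicting $w_0 \in (W^\perp)^\perp$. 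This same bijectivity-plus-continuity argument, now applied to the induced pairing $W \times (\mathcal W/W^\perp) \to \mathbb S^1$, upgrades the algebraic isomorphism $W \simeq \widehat{\mathcal W/W^\perp}$ to a topological one.

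With (2) established, the second identity in (1) is formal: apply the first identity to the pair $(W_1^\perp, W_2^\perp)$ and simplify via $(W_i^\perp)^\perp = W_i$ to obtain $(W_1^\perp + W_2^\perp)^\perp = W_1 \cap W_2$; then taking $\perp$ once more and invoking (2) on the closed subgroup $W_1^\perp + W_2^\perp$ yields $(W_1 \cap W_2)^\perp = W_1^\perp + W_2^\perp$. The main obstacle will be the realization of separating characters as pairings in the core step of (2): in the locally compact case this is the standard Pontryagin formalism, but here $\mathcal W = \A_{X,012}^\ar$ is only ind-pro and not locally compact, so one must lean on the explicit self-dual structure of the residue pairing and the ind-pro duality theory from [2] rather than on classical Pontryagin duality.
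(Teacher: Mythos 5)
The paper offers no proof of this lemma at all --- it is invoked as ``well-known'' and fed directly into the proof of Theorem 10 --- so your proposal is supplying an argument the authors leave entirely implicit, and the skeleton you give is the right one. The formal pieces are all correct: the first identity of (1) is pure bilinearity; each $W_i^\perp$ is automatically closed because the pairing is continuous and $\{1\}$ is closed in $\mathbb S^1$; and the second identity then follows, exactly as you say, from the first identity applied to the pair $(W_1^\perp,W_2^\perp)$ together with two uses of (2) (once to rewrite $(W_i^\perp)^\perp=W_i$, once on the closed subgroup $W_1^\perp+W_2^\perp$). Note that your derivation never consumes the hypothesis that $W_1+W_2$ is closed; that hypothesis is really there for the companion identity $(W_1^\perp\cap W_2^\perp)^\perp=W_1+W_2$, which is what the applications in Theorem 10 actually use. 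The genuinely non-formal content sits exactly where you place it: as literally stated, for an arbitrary continuous non-degenerate pairing on a topological group, the lemma is false --- one needs (i) that Hausdorff quotients of $\mathcal W$ have enough continuous characters to separate points, (ii) that every continuous character of $\mathcal W$ is realized as $\langle\cdot,y\rangle$ for some $y$, and (iii), for the topological isomorphism in (2), that the resulting continuous bijection $W\to\widehat{\mathcal W/W^\perp}$ has continuous inverse, which bijectivity plus continuity alone does not give outside the locally compact setting. All three must be imported from the self-duality of $\A_{X,012}^\ar$ under the residue pairing and the ind-pro duality theory of [2] and [3]; your proof is therefore complete only modulo that citation, but that is precisely the status the lemma has in the paper itself, and you have correctly identified the load-bearing point rather than papered over it with classical Pontryagin duality.
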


With above preparation, we are ready to prove our theorem.

\begin{proof} 
(A)  Topological duality between $H_\ar^0$ and $H_\ar^2$

By definition of $H^2_{\ar}$ in \S2.1, Lemma 12, and  Proposition 11(3), we have
$$
\begin{aligned}
\widehat {H^2_{\ar}(X,(\omega)-D)}\simeq& \Big(\A_{X,01}^\ar+\A_{X,02}^\ar+\A_{X,12}^\ar((\omega)-D)\Big)^\perp\\
\simeq &\Big(\A_{X,01}^\ar\Big)^\perp\cap\Big(\A_{X,02}^\ar\Big)^\perp\cap\Big(\A_{X,12}^\ar((\omega)-D)\Big)^\perp\\
=&\A_{X,01}^\ar\cap\A_{X,02}^\ar\cap \A_{X,12}^\ar(D)\simeq H^{0}_{\ar}(D).
\end{aligned}
$$
Indeed, by Proposition 8(1),  $\A_{X,01}^\ar+\A_{X,02}^\ar$ is closed, and by Proposition 8(3),  $\A_{X,01}^\ar+\A_{X,02}^\ar+\A_{X,12}^\ar((\omega)-D)$ is closed.
 
\noindent
(B) {Topological duality among $H_\ar^1$}

For $H_\ar^1$, similarly as in [4], see [2, Proposition 16], we have
the following group theoretic isomorphisms:
$$\begin{aligned}
&H^1_{\ar}(X,D)\\
\simeq&\Big(\A_{X, 01}^{\ar}\cap \big(\A_{X, 02}^{\ar}+ \A_{X, 12}^{\ar}(D)\big)\Big)\Big/
\Big(\A_{X, 01}^{\ar}\cap \A_{X, 02}^{\ar}+\A_{X, 01}^{\ar}\cap \A_{X, 12}^{\ar}(D)\Big)\\
\simeq&\Big(\A_{X, 02}^{\ar}\cap \big(\A_{X, 01}^{\ar}+\A_{X, 12}^{\ar}(D)\big)\Big)\Big/\Big(\A_{X, 01}^{\ar}\cap \A_{X, 02}^{\ar}
+\A_{X, 02}^{\ar}\cap \A_{X, 12}^{\ar}(D)\Big).
\end{aligned}$$

Consequently,  we have
$$
\begin{aligned}
\widehat{H^1_{\ar}(X,(\omega)-D)}
=&\Big(\frac{\A_{X, 02}^{\ar}\cap \big(\A_{X, 01}^{\ar}+\A_{X, 12}^{\ar}((\omega)-D)\big)}
{\A_{X, 01}^{\ar}\cap \A_{X, 02}^{\ar}+\A_{X, 02}^{\ar}
\cap \A_{X, 12}^{\ar}((\omega)-D)}\Big)^{\widehat{~~~}}\\
\simeq&\frac{\big(\A_{X, 01}^{\ar}\cap \A_{X, 02}^{\ar}\big)^\perp\cap
\big(\A_{X, 02}^{\ar}\cap \A_{X, 12}^{\ar}((\omega)-D)\big)^\perp}
{\big(\A_{X, 02}^{\ar}\big)^\perp+ \big(\A_{X, 01}^{\ar}+\A_{X, 12}^{\ar}((\omega)-D)\big)^\perp}\\
=&\frac{\big(\A_{X, 01}^{\ar}+ \A_{X, 02}^{\ar}\big)
\cap\big(\A_{X, 02}^{\ar}+ \A_{X, 12}^{\ar}(D)\big)}
{\A_{X, 02}^{\ar}+\A_{X, 01}^{\ar}\cap\A_{X, 12}^{\ar}(D)}\\
\simeq&\frac{\big(\A_{X, 01}^{\ar}+\A_{X, 02}^{\ar}\big)\cap \A_{X, 12}^{\ar}(D)}
{\A_{X, 01}^{\ar}\cap \A_{X, 12}^{\ar}(D)+\A_{X, 02}^{\ar}\cap \A_{X, 12}^{\ar}(D)}
\simeq H^1_{\ar}(X,D).
\end{aligned}
$$ 
Indeed,  the first equality and the last isomorphism are direct consequence of the definition. To 
verify the second isomorphism and the third equality, we use Proposition 8(1) and (2), namely,  
$\A_{X, 01}^{\ar}+ \A_{X, 02}^{\ar}$ and $\A_{X, 02}^{\ar}+ \A_{X, 12}^{\ar}(D)$ are closed, and 
Proposition 8(3), namely, $\A_{X,01}^\ar+\A_{X,02}^\ar+\A_{X,12}^\ar((\omega)-D)$ is closed.   
As for the forth morphisms, both associated quotients spaces are Hausdorff,
since denominators and numerators are all closed and $\A_{X,012}^\ar$ is Hausdorff.
They are also discrete since $H^1_\ar$ is always so. Therefore, this forth morphism is a topological homeomorphism, being a group isomorphism.\end{proof}
\vskip 0.10cm
{\footnotesize{
\noindent
{\bf Acknowledgements.} We would like to thank Osipov for pointing out that the closeness 
condition of $W_1+W_2$ and $W_1^\perp+ W_2^\perp$ 
is necessary for the validity of  Lemma 12.

This work is partially supported by JSPS.
}}

\vskip 1.0cm
\noindent
{\bf REFERENCES}
\vskip 0.10cm
\noindent
[1] A.A. Beilinson, Residues and adeles, Funct. Anal. Pril., 14 (1980), no. 1, 44-45; English transl. in 
Func. Anal. Appl., 14, no. 1 (1980), 34-35.
\vskip 0.20cm
\noindent
[2] K. Sugahara, L. Weng,  Arithmetic Cohomology Groups, preprint 2014, see also arXiv:1507.06074
\vskip 0.10cm
\noindent
[3] D. V. Osipov, A. N. Parshin, Harmonic analysis on local fields and adelic spaces. II.  Izv. Ross. Akad. 
Nauk Ser. Mat. 75 (2011), no. 4, 91--164
\vskip 0.10cm
\noindent
[4] A.N. Parshin,  On the arithmetic of two-dimensional schemes. I. Distributions and residues.  Izv. Akad. 
Nauk SSSR Ser. Mat. 40 (1976), no. 4, 736-773

\vskip 4.80cm
K. Sugahara \& L. Weng

Graduate School of Mathematics,
 
Kyushu University,
 
Fukuoka, 819-0395,
 
Japan
 
E-mails: k-sugahara@math.kyushu-u.ac.jp,
 
\hskip 2.30cm  weng@math.kyushu-u.ac.jp

\end{document}